\newtheorem{theorem}{Theorem}[section]
\newtheorem{claim}{Claim}
\theoremstyle{lemma}
\newaliascnt{lemma}{theorem}
\theoremstyle{prop}
\newaliascnt{prop}{theorem}
\newtheorem{prop}[prop]{Proposition}
\theoremstyle{cor}
\newaliascnt{cor}{theorem}
\newtheorem{cor}[cor]{Corollary}
\theoremstyle{definition}
\newaliascnt{definition}{theorem}
\newtheorem{definition}[definition]{Definition}
\theoremstyle{remark}
\newaliascnt{remark}{theorem}
\newtheorem{remark}[remark]{Remark}
\theoremstyle{example}
\newaliascnt{example}{theorem}
\numberwithin{equation}{section}
\begin{document}

\title[Explicit formulae for the Aicardi-Juyumaya bracket of tied links]{Explicit formulae for the Aicardi-Juyumaya bracket of tied links}


\author[O. Cárdenas-Andaur]{O'Bryan Cárdenas-Andaur}
\address{Departamento de Álgebra, Facultad de Matemáticas, Instituto de Matemáticas (IMUS), Universidad de Sevilla, Av. Reina Mercedes s/n, 41012 Sevilla, Spain}
\curraddr{}
\email{obryan.cardenas@uv.cl}
\author[J. González-Meneses]{Juan González-Meneses}
\address{Departamento de Álgebra, Facultad de Matemáticas, Instituto de Matemáticas (IMUS), Universidad de Sevilla, Av. Reina Mercedes s/n, 41012 Sevilla, Spain}
\curraddr{}
\email{meneses@us.es}
\thanks{}
\author[M. Silvero]{Marithania Silvero}
\address{Departamento de Álgebra, Facultad de Matemáticas, Instituto de Matemáticas (IMUS), Universidad de Sevilla, Av. Reina Mercedes s/n, 41012 Sevilla, Spain}
\curraddr{}
\email{marithania@us.es}
\thanks{}


\date{\today}

\dedicatory{}

\begin{abstract}
The double bracket $\langle \langle \cdot \rangle \rangle$ (also known as the AJ-bracket) is an invariant of framed tied links that extends the Kauffman bracket of classical links. Unlike the classical setting, little is known about the structure of AJ-states (analogous to classical Kauffman states) of a given tied link diagram, and no general state-sum formula for the AJ-bracket is currently available. In this paper we analyze the AJ-states of $2$- and $3$-tied link diagrams, and provide a complete description of their associated resolution trees leading to a computation of $\langle \langle \cdot \rangle \rangle$. As a result, we derive explicit state-sum formulas for the AJ-bracket. 
These are the first closed-form expressions of this kind, and they constitute a concrete step toward a combinatorial categorification of the tied Jones polynomial.
\end{abstract}

\maketitle
    
\section{Introduction}

Tied links were introduced in \cite{Aicardi2016} as a generalization of classical links where components are partitioned into different sets. This naturally raises the question of whether a given property of classical links can be extended to the setting of tied links. 

When considering the generalization of link invariants, one is led to ask whether the tied version (if any) of a given invariant is stronger than its classical counterpart. This is indeed the case for the tied Jones polynomial introduced in \cite{Aicardi2018}, which is able to distinguish tied links whose associated classical links (i.e., those obtained when forgetting the partitions) are not distinguished by the classical Jones polynomial. Some examples of pairs of links with the above property can be found in \cite{Cardenas2024}. 

As in the classical case, the tied version of the Kauffman bracket (the so-called AJ-bracket) plays a central role in the definition of the tied Jones polynomial. However, unlike the classical situation \cite{Kauffman1987}, no state-sum formula is known for this invariant, nor is the structure of the diagrams that play the role of Kauffman states (called AJ-states) fully understood. The main difficulty lies in the fact that one of the defining skein relations of the AJ-bracket alters the partitions of the components. As a consequence, when constructing a resolution tree to compute the AJ-bracket of a tied link diagram $D$, the order in which crossings are smoothed is crucial in determining which diagrams (AJ-states) appear as leaves of the tree. Thus, it may happen that an AJ-state is a leaf in a resolution tree $T_D$ of $D$, but does not appear in another resolution tree $T’_D$ of the same diagram. Even more, the number of leaves in $T_D$ and $T’_D$ may, in general, differ (see Figure 11 in \cite{Cardenas2024} for such an example). 

These difficulties prevent the formulation of state-sum expressions analogous to those of the Kauffman bracket in the general tied case.

In this paper, we study the structure of AJ-states of tied link diagrams, focusing on the cases of $2$-tied and $3$-tied links, that is, links whose components are partitioned into two and three sets, respectively. In particular, we compute the number of AJ-states and the number of leaves in any resolution tree of $2$-tied link diagrams, proving that their number does not depend on the chosen order of smoothings. We also compute the number of AJ-states and leaves in a specific resolution tree of any $3$-tied link diagram. 

Furthermore, we obtain closed-form expressions for the AJ-bracket of $2$- and $3$- tied diagrams in terms of the Kauffman bracket of certain (classical) sublinks. These results shed light on the combinatorial structure of the AJ-bracket, providing explicit tools for its computation and paving the way for a potential categorification in the form of a tied Khovanov homology which strengths its classical counterpart \cite{Khovanov2000}. Our analysis may also offer new insights into the structure of other polynomial invariants extended to the tied setting, such as those studied in \cite{Aicardi2016, Chlouveraki2020, Aicardi2020, Aicardi2021}.

The paper is organized as follows. In Section~\ref{sec:preliminaries} we review the definition of tied links and the AJ-bracket, together with a preliminary analysis of resolution trees and AJ-states. Sections~\ref{section_bicolor} and \ref{section_tricolor} are devoted to the analysis of $2$- and $3$-tied links, respectively, where we present and prove our main results.

\bigskip

\noindent {\bf Acknowledgements:} The authors were partially supported by the project PID2024-157173NB-I00 funded by MCIN/AEI/10.13039/501100011033 and by FEDER, EU. The first author is supported by ANID, Beca Chile Doctorado en el Extranjero, Folio 72220167.

\section{Preliminaries}
\label{sec:preliminaries}
\subsection{Tied Links}

\begin{definition}
A tied link is a pair $(L, P)$, where $L$ denotes a classical link, and $P$ represents a partition of its components. Two tied links are said to be equivalent if the associated classical links are related by an ambient isotopy preserving the partition of their components. 
\end{definition}
  
We can think of a tied link as a colored link where components share the same color if and only if they belong to the same subset of the partition. Notice that the number of required colors in a tied link is bounded above by the number of components.  If the components of $L$ are partitioned into $n$ subsets (i.e., they are colored using $n$ colors), we say that $(L,P)$ is an $n$-tied link.

Throughout this paper we often drop $P$ from the notation, and think of the $n$-tied link $L$ as a classical link where each component has been assigned a label (color) in the set $\{1, 2, \ldots, n\}$. We endow an order in the set of colors given by the natural order in $\mathbb{N}$.
 
A tied link diagram $D$ is a labeled (colored) regular projection of a tied link. Similarly to the classical case, two tied link diagrams represent equivalent tied links if there exists a finite sequence of (classical) Reidemeister moves transforming one into the other while preserving the coloring of the components, up to color permutation. 

\subsection{Aicardi-Juyumaya Bracket and diagrams complexity}

In \cite{Aicardi2018} F. Aicardi and J. Juyumaya generalized the Kauffman bracket for classical links to the setting of tied links. In this section, we review their construction, which we call \textit{Aicardi-Juyumaya bracket} (AJ-bracket), recall some of its properties and introduce a complexity function for a tied link diagram, following~\cite{Cardenas2024}. We first introduce some notation. 

Let $D$ be a tied link diagram and consider two colors $i$ and $j$ associated to two partition subsets of its components, with $i < j$. We denote by $D_m, D_{d_1}, D_{d_2}, D_a$, and $D_b$ five tied link diagrams which are identical as classical diagrams everywhere but in a neighborhood of a crossing, as shown in \autoref{multidiagram}, where colors $i$ and $j$ are represented by black and red colors, respectively. As tied link diagrams, components colored by $j$ in $D_{d_1}$and $D_{d_2}$ turn out to be colored by $i$ in $D_m, D_a$ and $D_b$, with the rest of colors being preserved. Therefore if $D_{d_1}$ and $D_{d_2}$ represent $n$-tied links, then $D_m, D_a$ and $D_b$ represent $(n-1)$-tied links. 

\begin{figure}[H]
  \centering
  \includegraphics[height=2.5cm]{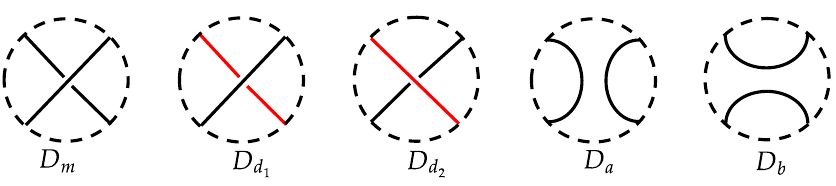}
  \caption{Local diagrams with colors $i$ (black) and $j$ (red) such that $i<j$.}
  \label{multidiagram}
\end{figure}

Given an $n$-tied link diagram $D$, denote by $D\sqcup\bigcirc$  (resp. $D\widetilde{\sqcup}\bigcirc$) the $(n+1)$-tied link diagram (resp. the $n$-tied link diagram) consisting of the disjoint union of $D$ together with the trivial diagram of the unknot colored by $n+1$ (resp. colored by one of the colors in $\{1, \ldots, n\}$).

\begin{theorem}\cite{Aicardi2018} \label{invariante}
There exists a unique function $\langle\langle\cdot\rangle\rangle \ : \{\mbox{tied links}\}\rightarrow\mathbb{Z}[A^{\pm1},c]$, defined by the following axioms:
\begin{enumerate}
    \item $\langle\langle \bigcirc  \rangle\rangle=1$,
    \item $\langle\langle D \sqcup \bigcirc\rangle\rangle=c\cdot\langle\langle D\rangle\rangle$,
    \item $\langle\langle D \widetilde{\sqcup} \bigcirc\rangle\rangle=-(A^2+A^{-2})\langle\langle D\rangle\rangle$,
    \item $\langle\langle \cdot \rangle\rangle $ is invariant under Reidemeister moves II y III.
    \item $\langle\langle D_{m} \rangle\rangle =
A\ \langle\langle D_a\rangle\rangle +
 {A^{-1}} \  \langle\langle D_b \rangle\rangle $,
\item $\langle\langle D_{d_1} \rangle\rangle  +  \langle\langle D_{d_2} \rangle\rangle =
\delta\left( \langle\langle D_a \rangle\rangle +
  \langle\langle D_b \rangle\rangle \right) $, with $\delta = A+A^{-1}$.
\end{enumerate}
\end{theorem}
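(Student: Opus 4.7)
The plan is to establish existence and uniqueness along the lines of Kauffman's original argument for the classical bracket, with adaptations to handle the color-merging relation (6).

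For \emph{uniqueness}, I would proceed by strong induction on the complexity pair $c(D) = (n(D), b(D))$ ordered lexicographically, where $n(D)$ counts crossings and $b(D)$ counts bicolor crossings. The base case $n(D) = 0$ is a disjoint union of colored circles: axiom (1) fixes the value on one circle, axiom (2) prescribes the factor for each additional unknot of a new color, and axiom (3) the factor for each unknot whose color already appears, so $\langle\langle D\rangle\rangle$ is forced. For the inductive step, fix a crossing. If it is monochromatic, axiom (5) expresses $\langle\langle D\rangle\rangle$ in terms of two diagrams with strictly smaller $n$. If it is bicolor, the right-hand side of axiom (6) involves two diagrams $D_a, D_b$ of strictly smaller complexity (one fewer crossing and one fewer color), while the left-hand side couples two equal-complexity terms $\langle\langle D_{d_1}\rangle\rangle$ and $\langle\langle D_{d_2}\rangle\rangle$. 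One therefore needs a second independent equation to solve for each individually: natural candidates are an ambient isotopy exchanging $D_{d_1}$ and $D_{d_2}$ via axiom (4), or a further application of (5) to an auxiliary monochromatic crossing introduced by a Reidemeister~II move.

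For \emph{existence}, I would build $\langle\langle\cdot\rangle\rangle$ via a resolution tree: fix an ordering of the crossings of $D$, recursively apply axioms (5) and (6) in that order until only crossing-free colored diagrams remain as leaves, evaluate each leaf using (1)--(3), and define $\langle\langle D\rangle\rangle$ as the resulting weighted sum. Well-definedness reduces to a confluence check---any two orderings differ by a sequence of adjacent transpositions, so it suffices to verify that resolving two adjacent crossings in either order produces the same weighted contribution. This entails a finite case analysis over the three pairings (mono/mono, mono/bi, bi/bi) of adjacent crossings. Invariance under Reidemeister~II and III (axiom (4)) then follows from local computations analogous to the classical Kauffman argument, invoking (6) to merge colors where the local strands require it.

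The main obstacle is the confluence check for axiom (6). Unlike axiom (5), which is a one-sided smoothing rule, (6) is a four-term relation that does not express any single $\langle\langle D\rangle\rangle$ in terms of strictly simpler diagrams. As the introduction stresses, different resolution orders can genuinely produce different families of AJ-states (see Figure~11 of \cite{Cardenas2024}), so showing that the weighted sum is independent of the order is the technical heart of the proof---subtle enough that a closed state-sum formula has remained elusive in general, which is precisely the motivation of the present paper.
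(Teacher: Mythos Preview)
This theorem is not proved in the present paper: it is quoted verbatim from \cite{Aicardi2018} and used as a black box. There is therefore no ``paper's own proof'' to compare your proposal against; the authors simply cite the result and build on it.

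That said, your uniqueness argument has a genuine gap. Your complexity is $(n(D),b(D))$ with $b(D)$ the number of \emph{bicolor} crossings, and you observe correctly that axiom~(6) alone does not determine $\langle\langle D_{d_1}\rangle\rangle$ from strictly simpler diagrams. But your proposed second equation---an ambient isotopy exchanging $D_{d_1}$ and $D_{d_2}$---does not exist in general: those two diagrams differ by a crossing change and can represent inequivalent links. The actual mechanism, which the paper exploits throughout (see Definition~2.3 and Remark~2.5), is to fix a total order on the colors and use as second coordinate the number of \emph{illegal} crossings (monochromatic, or dichromatic with the smaller color on top). Then axiom~(6) rewrites as
\[
\langle\langle D_{d_1}\rangle\rangle \;=\; \delta\bigl(\langle\langle D_a\rangle\rangle+\langle\langle D_b\rangle\rangle\bigr)\;-\;\langle\langle D_{d_2}\rangle\rangle,
\]
and now \emph{every} term on the right has strictly smaller complexity: $D_a,D_b$ have one fewer crossing, and $D_{d_2}$ has the same crossing count but one fewer illegal crossing (the crossing has become legal). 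With this complexity the induction closes without needing any auxiliary equation. Your measure $b(D)$ does not drop when passing from $D_{d_1}$ to $D_{d_2}$, which is why you were forced to look for a second relation that is not there.
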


Given a diagram $D_m$ with a distinguished crossing $x$ as drawn in \autoref{multidiagram}, we say that the diagram $D_a$ (resp. $D_b$) is obtained from $D_m$ by a smoothing of type $\bar{0}$ (resp. type $\bar{1}$) of the crossing $x$. Analogously, we say that the diagram $D_a$ (resp. $D_b$) is obtained from the tied link diagram $D_{d_1}$ by a smoothing of type $0$ (resp. type $1$) of the distinguished crossing, and that $D_{d_2}$ is obtained from $D_{d_1}$ by a smoothing of type $2$ of the distinguished crossing. Observe that the order of the colors is crucial when identifying diagrams $D_{d_1}$ and $D_{d_2}$. 

\begin{remark}\label{low-complexity}
When applying a smoothing of type $ \bar{0} $, $ \bar{1} $, or $ 2 $, the colors associated to the link components are preserved. However, in smoothings of types $0$ and $1$, axiom $(6)$ implies that the resulting arcs in $D_a$ and $D_b$ inherit the color of one of the components involved in the distinguished crossing (i.e., two subsets of the partition are merged into one). To avoid indeterminacy, if the colors of the involved components in $D_{d_1}$ are $i$ and $j$, where $i<j$, we declare that the resulting arcs in $D_a$ and $D_b$ (and therefore, all components colored by $i$ and $j$) inherit color $i$. 
\end{remark}

\autoref{smoothing} summarizes the previous discussion and encodes axioms $(5)$ and $(6)$ in Theorem~\ref{invariante}. 

\begin{figure}[H]
    \centering
    \includegraphics[scale=0.6]{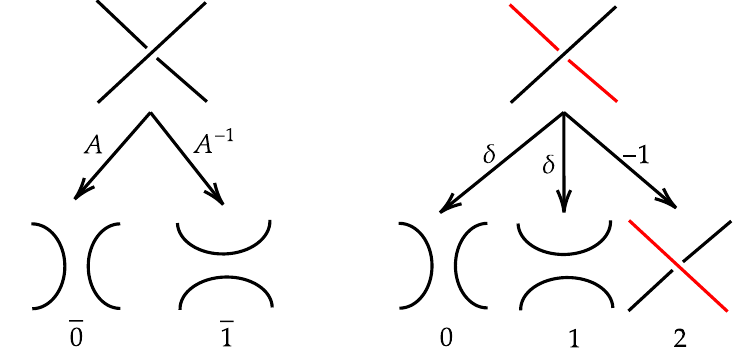}
    \caption{Resolution trees encoding axioms $(5)$ (left) and $(6)$ (right), where colors $i$ (black) and $j$ (red) satisfy $i<j$.}
    \label{smoothing}
\end{figure}

Consider a crossing $x$ in the tied link diagram $D$, and write $a$ (resp. $b$) for its upper (resp. lower) arc. We say that $x$ is an \textit{illegal crossing} if it satisfies one of the following conditions: 
\begin{enumerate}
	\item Both arcs $a$ and $b$ share the same color (as in the special crossing shown in $D_m$ in \autoref{multidiagram}). 
	\item The color $i$ associated to arc $a$ and the color $j$ associated to arc $b$ satisfy $i<j$ (as in the special crossing shown in $D_{d_1}$ in \autoref{multidiagram}). 
\end{enumerate}  
Classical crossings are those satisfying the description in (1) and we call them monochromatic, while crossings satisfying (2) are said to be dichromatic illegal crossings. Observe that applying axiom (5) to a monochromatic illegal crossing, or axiom (6) to a dichromatic illegal crossing, results into the smoothings shown in \autoref{smoothing}.

\begin{definition}
The complexity of a tied link diagram $D$ is defined as the pair $(C_T, C_I)$, where $C_T$ corresponds to the total number of crossings in $D$, and $C_I$ to its number of illegal crossings. 
\end{definition}

Diagram $D$ shown in \autoref{resolution-tree} contains two monochromatic (black) crossings, and two dichromatic illegal crossings, whose upper arcs are colored by $1$ (black) whereas the lower arcs are colored by $2$ (blue). Consequently, the complexity of this diagram is $(6,4)$.
	
Since $(C_T, C_I)\in\mathbb{N}^2$, we can equip the set of complexities of tied link diagrams with the lexicographic order.

\subsection{Resolution trees and AJ-states}

Given a tied link diagram $D$, we can construct a resolution tree rooted at $D$ whose vertices are labeled by diagrams of tied links obtained through the iterative application of the smoothings shown in \autoref{smoothing} (i.e., applying axioms (5) and (6) in \autoref{invariante} to a chosen illegal crossing in each step). That is, the children of each vertex $v$ correspond to the two (resp. three) diagrams obtained by performing type $ \bar{0} $ and $ \bar{1} $ smoothings (resp. type 0, 1, and 2 smoothings) on a monochromatic (resp. dichromatic illegal) crossing of the diagram depicted in $v$. At each edge of the tree, we keep the labels shown in \autoref{smoothing}. We set the process to finish when every leaf of the tree consists of a tied link diagram with no illegal crossings, that we call \textit{AJ-state} (see \autoref{AJ-state}). See \autoref{resolution-tree}, where diagrams $D_5$ to $D_8$ are AJ-states. 

\begin{remark}\label{low-complexity2}
The process of constructing a resolution tree always terminates, as the children of a vertex have smaller complexity than their parent. More precisely, smoothing an illegal crossing $x$ of a diagram $D$ with complexity $(m,k)$ yields the following outcomes (see \autoref{low-complexity}):
\begin{itemize}
\item If $x$ is monochromatic, then both resulting diagrams have complexity $(m-1,k-1)$.
\item If $x$ is dichromatic, then two resulting diagrams have complexity $(m-1,k')$ for some $k' \in \mathbb{N}$, and the third diagram has complexity $(m, k-1)$.
\end{itemize}
\end{remark}

Notice also that distinct vertices of a resolution tree could have the same associated diagram, so we will try to make a clear distinction between a vertex of the tree and the diagram which appears in that vertex. Nevertheless, to simplify the writing, we will talk about some feature of a vertex (for instance, its AJ-bracket), meaning the feature of the diagram associated to that vertex.

Given a resolution tree, the AJ-bracket of a vertex is equal to the sum of the AJ-brackets of its children, each one multiplied by the label of its connecting edge. Therefore, the AJ-bracket of the root is equal to the sum of the AJ-brackets of all leaves, each one multiplied by the product of the labels in the unique path connecting that leaf to the root. The AJ-bracket of each leaf can be computed using axioms (1), (2), (3) and (4) in \autoref{invariante}, as we will shortly see.

If $D$ contains more than one illegal crossing, the resolution tree described above (and therefore, the set of tied diagrams appearing at its leaves) is not unique, since the process depends on the chosen order of the crossings to be smoothed. Moreover, the set of illegal crossings might change when applying smoothings of type $0$ and $1$, as these smoothings do not preserve the colors of the components. 

\begin{definition}\label{AJ-state}
The Aicardi-Juyumaya states (AJ-states) associated to a tied link diagram $D$ are those diagrams containing no illegal crossings (i.e., with complexity $(n , 0)$) that can be obtained from $D$ by a finite sequence of smoothings of illegal crossings following axioms (5) and (6) in \autoref{invariante}.  In other words, they are the states that appear at a leaf of some resolution tree of $D$.
\end{definition}

Notice that, in an AJ-state, the components corresponding to the same color must be a family of disjoint circles (as there cannot be monochromatic crossings). Moreover, if circles of different colors $i<j$ overlap, the circle with color $i$ must be below the circle with color $j$ (as all dichromatic crossings must be legal). Therefore, after applying Reidemeister moves of type II and III (which do not modify the AJ-bracket by axiom (4) in \autoref{invariante}), we can assume that an AJ-state is a family of disjoint, not overlapping circles of distinct colors. If an AJ-state $D$ has $k$ colors and $s$ circles, its AJ-bracket is precisely 
$$
\langle\langle D\rangle\rangle = c^{k-1}(-A^2-A^{-2})^{s-k},
$$ 
by axioms (1), (2) and (3) in \autoref{invariante}.

If the resolution tree of a tied link diagram $D$ has a leaf with a single color, then the set of AJ-states of $D$ contains the set of Kauffman states associated to the (classical) diagram obtained by forgetting the colors of $D$.
As an example, in \autoref{resolution-tree} we can see an (incomplete) resolution tree. The diagrams $D_1$ to $D_4$ still have monochromatic crossings, which will be smoothed in the classical way. Each possible Kauffman state of $D$ will appear as a leaf of some $D_i$, for $i=1,\ldots,4$. The diagrams $D_5$ to $D_8$ are AJ-states in two colors, so they are already leaves of the resolution tree.

\begin{figure}[H]
	\centering
	\includegraphics[scale=0.55]{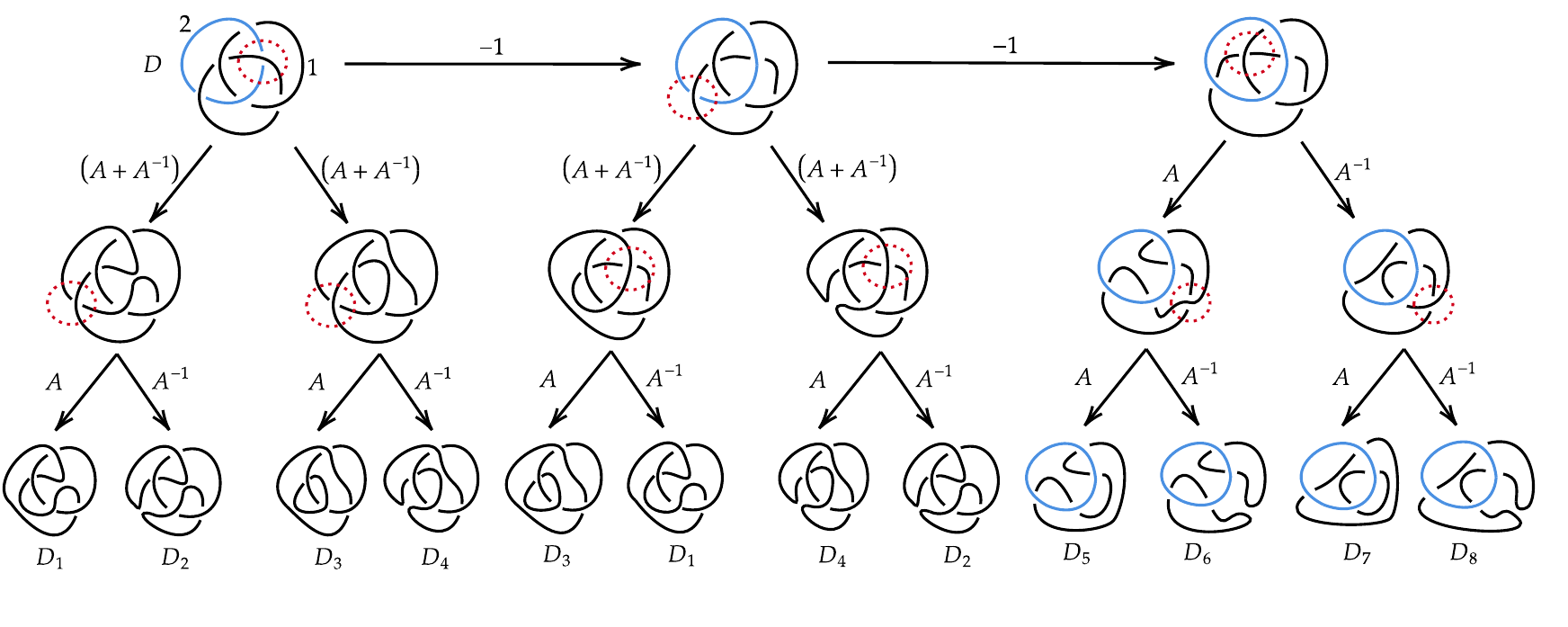}
	\caption{The first steps in the construction of a resolution tree of $D$. Diagrams $D_5$ to $D_8$ are AJ-states, whereas $D_1$ to $D_4$ are not.} 
	\label{resolution-tree}
\end{figure}

When computing the Kauffman bracket of a (classical) link diagram, one might consider different resolution trees, depending on the sequence of crossings to be iteratively smoothed. However, the number of leaves of the resolution tree is determined by the number of crossings of the diagram; more precisely, if $D$ contains $m$ crossings, then the number of leaves of any resolution tree is $2^m$. Moreover, each Kauffman state appears exactly once as a leaf in every resolution tree rooted at $D$, and therefore the total number of Kauffman states of $D$ is $2^m$. 

In the tied case the situation turns out to be a bit more convoluted. As in the classical case, when applying smoothings in \autoref{smoothing} to a tied link diagram $D$, one might consider different sequences of illegal crossings, resulting into different resolution trees. This time, given two resolution trees $T_1$ and $T_2$ of $D$, the set of AJ-states appearing as leaves of $T_1$ does not necessarily coincide with that of $T_2$. Nevertheless, we will prove in \autoref{section_bicolor} that, at least in the case of $2$--tied links, these two sets have the same cardinal, and moreover, both trees will have the same number of leaves.

\begin{remark}\label{repetitions}
We point out that AJ-states may appear multiple times at different leaves of a resolution tree. As an example, see the incomplete resolution tree shown in \autoref{resolution-tree}, i.e., where the (tied link) diagrams in the lower row sharing the same label are equal, but they have been obtained by applying a different sequence of smoothings. 

In general, a given AJ-state may appear in the subtrees hanging from two of the branches created when smoothing a dichromatic illegal crossing. For example, if $D$ contains two dichromatic illegal crossings, say $c_1$ and $c_2$, involving the same pair of colors, then performing a type $0$ smoothing on $c_1$ followed by a type $\bar{0}$ (resp. $\bar{1}$) smoothing on $c_2$ yields the same diagram as performing a type $2$ smoothing on $c_1$, followed by a type $0$ (resp. $1$) smoothing on $c_2$, and then a type $\bar{1}$ smoothing on $c_1$.
\end{remark} 

In \cite{Cardenas2024} it was shown that the total contribution of an AJ-state of $D$ to the bracket $ \langle \langle D \rangle \rangle$ (adding the contributions of all leaves associated to that particular AJ-state) does not depend on the chosen resolution tree. In particular, if an AJ-state $D_0$ appears in some leaves of a tree $T_1$, but it does not appear in any leave of a tree $T_2$ of the same diagram, then the total contribution of all leaves of $T_1$ corresponding to $D_0$ must be 0.

\subsection{Notation}\label{notation}

Given an $n$-tied link diagram $D$, we partition its set $C$ of crossings into the following subsets:
\begin{itemize}
	\item $X_{i,j}$ consists of the set of dichromatic illegal crossings whose arcs are colored by $i$ and $j$; we write $x_{i,j} = \#(X_{i,j})$. 
	\item $Y_{i,j}$ consists of the set of dichromatic legal crossings whose arcs are colored by $i$ and $j$; we write $y_{i,j} = \#(Y_{i,j})$.
	\item $Z_i$ consists of the set of monochromatic (illegal) crossings whose arcs are colored by $i$; we write $z_i = \#(Z_i)$.
\end{itemize}

Since $X_{i,j} = X_{j,i}$ and $Y_{i,j} = Y_{j,i}$, we keep the labeling where subindices are used in ascending order. We also define the sets $$X = \displaystyle\bigcup_{i<j} X_{i,j}, \quad \quad Y = \displaystyle\bigcup_{i<j} Y_{i,j}  \quad \mbox{ and } \quad Z = \displaystyle\bigcup_i Z_i,$$ and denote their cardinalities by $x$, $y$ and $z$, respectively. The number of crossings of $D$ is $\#(C)=x+y+z$.

In the subsequent sections, when studying the diagrams appearing in a resolution tree of $D$, the sets and numbers we just defined will always correspond to the diagram $D$, that is, to the root of the tree, unless otherwise stated.

Now let $S\subset C$ be a subset of the crossings of $D$. As usual, we denote $\{0,1\}^S$ to be the set of maps from $S$ to $\{0,1\}$. Then, for every $\sigma\in \{0,1\}^S$ we define 
$$
     k_{\sigma}=\#(\sigma^{-1}(0))-\#(\sigma^{-1}(1)).
$$ 
Notice that, if $S=\varnothing$, there is a unique possible $\sigma$ (the empty map), and $k_{\sigma}=0$.

Suppose now that $\sigma\in \{0,1,\bar{0},\bar{1}\}^S$ for some $S\subset C$. In this case we define:
$$
   e_{\sigma}=\#(\sigma^{-1}(\bar{0}))-\#(\sigma^{-1}(\bar{1})).
$$

Finally, given $\sigma\in \{0,1\}^S$, we denote by $D_{\sigma}$ the diagram obtained by smoothing each crossing $c\in S$ according to $\sigma(c)$ (or to $\overline{\sigma(c)}$, if the crossing is monochromatic). If some crossing $c\in S$ is dichromatic, the two colors must be merged into one (the smaller one). The order in which the crossings of $S$ are smoothed is irrelevant, hence $D_{\sigma}$ only depends on the original diagram $D$ and on the map $\sigma$. Notice that, if $S=\varnothing$, then $D_\sigma=D$.


\section{State sum model for 2-tied links}\label{section_bicolor}

For classical links, Kauffman developed a combinatorial method to compute the Jones polynomial via the so-called Kauffman bracket, whose defining relations are given by (1), (3) and (5) in \autoref{invariante}. Recall that the Kauffman states associated to a diagram $D$ are obtained by smoothing each crossing of $D$ in both possible ways (i.e., applying a $\bar{0}$ or a $\bar{1}$ smoothing). Therefore, given a state $s$ consisting of $k$ circles obtained by performing $\bar{1}$--smoothings to $r$ of the $m$ crossings of $D$ (and $\bar{0}$--smoothings of the remaining $m-r$ crossings), it follows from the defining relations that the contribution of $s$ to the bracket $\langle D \rangle$ is $A^{m-2r} (-A^2-A^{-2})^{k-1}$.

Taking the sum over all Kauffman states associated to $D$ one obtains the state sum formula for the Kauffman bracket of a (classical) link diagram:
\begin{align}
	\label{suma-estados}
	\langle D \rangle=\sum_{s}A^{m-2r}(-A^2-A^{-2})^{k-1}.
\end{align}

However, when considering the tied case, no expression analogous to the state sum formula \eqref{suma-estados} is known for the AJ-bracket. In this section we analyze the AJ-states associated to $2$-tied links, and provide a state sum formula to compute the AJ-bracket of those diagrams representing them. 

Using the notations in \autoref{notation}, we have the following:

\begin{theorem}\label{bicolor}
Let $D$ be a 2-tied link diagram with $m=x+y+z$ crossings. Then, in every resolution tree of $D$: 
\begin{enumerate}
    \item The number of leaves is $2^z+x2^m$.
    \item The number of dichromatic AJ-states is $2^z$.
    \item The number of monochromatic AJ-states is $0$ if $x=0$ and $2^m$ if $x>0$.
\end{enumerate}
\end{theorem}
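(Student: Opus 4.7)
The plan is to handle parts (2) and (3) by direct combinatorial arguments and to establish part (1) by induction on the complexity $(C_T, C_I)$, invoking the classical Kauffman state count for $1$-tied subdiagrams as a black box.

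For part (2), the key observation is that smoothings of types $\bar 0$, $\bar 1$, and $2$ preserve colors while types $0$ and $1$ merge them. Any sequence of smoothings leading to a dichromatic AJ-state must therefore avoid types $0$ and $1$: the $x$ crossings in $X$ must each be smoothed with type $2$ (no choice), the $z$ crossings in $Z$ must each be smoothed with type $\bar 0$ or $\bar 1$ (two choices), and the crossings in $Y$ remain untouched. This yields a bijection between dichromatic AJ-states of $D$ and maps $Z \to \{\bar 0, \bar 1\}$, of cardinality $2^z$; moreover this correspondence is realized along any resolution tree $T$ by choosing, at each vertex, the branch compatible with the fixed assignment. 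For part (3), the case $x = 0$ is immediate since no merging can ever occur. When $x > 0$, I would establish a bijection between monochromatic AJ-states of $D$ and Kauffman states of the underlying classical diagram (obtained by forgetting colors): every monochromatic AJ-state, after forgetting color, is such a Kauffman state; conversely, a Kauffman state $K$ is realizable by picking any $c \in X$, smoothing it with type $0$ or $1$ according to $K(c)$ so that colors merge, and then smoothing each remaining (now monochromatic) crossing with the label prescribed by $K$. This gives $2^m$ monochromatic AJ-states.

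For part (1), I proceed by induction on complexity in lexicographic order, justified by \autoref{low-complexity2}. The base case $C_I = 0$ corresponds to a $2$-tied AJ-state with a single leaf, matching $2^0 + 0 \cdot 2^m = 1$. In the inductive step, let $c$ be the crossing smoothed at the root of $T$. If $c \in Z$, both children are $2$-tied with parameters $(m-1, x, y, z-1)$, and the inductive hypothesis gives $2 \cdot (2^{z-1} + x \cdot 2^{m-1}) = 2^z + x \cdot 2^m$ leaves. If $c \in X$, the children $D_0, D_1$ are $1$-tied diagrams with $m-1$ monochromatic crossings, each contributing $2^{m-1}$ leaves by the classical Kauffman count, while $D_2$ is $2$-tied with parameters $(m, x-1, y+1, z)$, contributing $2^z + (x-1) \cdot 2^m$ leaves by the inductive hypothesis (uniformly in whether $x-1$ is zero). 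The total $2 \cdot 2^{m-1} + 2^z + (x-1) \cdot 2^m = 2^z + x \cdot 2^m$ matches the claim.

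The most delicate point will be in part (3): distinguishing between counting distinct monochromatic AJ-states and counting leaves that bear them. As noted in \autoref{repetitions}, a single monochromatic AJ-state may occur at many leaves of $T$. The claim to verify carefully is that every one of the $2^m$ classical Kauffman states of the underlying diagram is realized as a leaf in every resolution tree, regardless of the smoothing order chosen at each vertex. I plan to address this by showing that, at each vertex of $T$, a suitable branch can always be selected to match any prescribed classical assignment of smoothing labels, leveraging the freedom between types $0/1$ (direct merge) and the sequence type $2$ followed by a later $\bar 0$ or $\bar 1$ smoothing of the same crossing.
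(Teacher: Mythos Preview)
Your proposal is correct. For part~(1) you run exactly the induction the paper runs, with the same case split on whether the first smoothed crossing is monochromatic or dichromatic illegal, and the same arithmetic.

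For parts~(2) and~(3) you take a genuinely different route. The paper proves all three statements simultaneously by the same induction on complexity: in each inductive step it tracks separately how the dichromatic and monochromatic state counts of the children recombine, using that states coming from the $\bar 0$-child and the $\bar 1$-child (or from $D_a$ and $D_b$) are distinct because the distinguished crossing is smoothed differently. Your approach instead characterises the relevant AJ-states globally: dichromatic states are exactly the diagrams obtained by type-$2$ smoothing every crossing of $X$ and $\bar 0/\bar 1$-smoothing every crossing of $Z$, hence are parametrised by $\{0,1\}^Z$; monochromatic states are exactly the classical Kauffman states of the underlying uncoloured diagram, hence are parametrised by $\{0,1\}^C$. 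You then argue separately that each such state is reached in \emph{every} resolution tree by choosing suitable branches. This is cleaner conceptually, since it identifies the state sets once and for all rather than rebuilding them inductively, and it makes transparent why the counts are independent of the tree. The paper's approach has the advantage of being entirely uniform and of not requiring the ``realisability in every tree'' argument as a separate step, since that fact emerges automatically from the induction.

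Your identification of the delicate point in~(3) is accurate, and your sketched strategy is sound: in any fixed tree, at each vertex follow the branch compatible with the target Kauffman assignment~$K$ (choosing type~$K(c)$ at the first dichromatic illegal crossing encountered, which forces the merge, and $\overline{K(c)}$ at monochromatic crossings thereafter). Since $x>0$ guarantees that some $X$-crossing is eventually smoothed along any root-to-leaf path before the path can terminate, the merge does occur and the leaf reached is the monochromatic state~$K$. The ``type~$2$ then later $\bar 0/\bar 1$'' flexibility you mention is not actually needed for this argument in the $2$-tied case, though it does no harm.
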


\begin{proof}
First notice that, if $D_0$ is a 1-tied link diagram with $m$ crossings, then any resolution tree of $D_0$ will be a classical resolution tree, with $2^m$ leaves, all of them monochromatic.

Now let $D$ be a 2-tied link diagram. We proceed by induction on the complexity of $D$. If $D$ has complexity $(m,0)$, then it is an AJ-state, in which all crossings (if any) are dichromatic legal crossings. The only possible tree in this case consists of a single vertex (the root), and the result holds as $x=z=0$.
    
Suppose now that $D$ is a $2$-tied link diagram of complexity $(m, k)$, with $k>0$, and assume that the statement holds for any $2$-tied diagram with lower complexity. Let $T_D$ be a resolution tree rooted at $D$, and focus on the first crossing $c$ smoothed in $T_D$ (i.e, the illegal crossing whose smoothing corresponds to the root of $T_D$ and its 2 or 3 children). There are two possibilities: \\

\noindent \textbf{Case 1:} If $c$ is a monochromatic crossing, then $T_D$ contains two subtrees $T_{D_a}$ and $T_{D_b}$ rooted at the 2-tied link diagrams $D_a$ and $D_b$, respectively (see \autoref{smoothing}, where $D$ takes the role of $D_m$). By \autoref{low-complexity2}, both diagrams $D_a$ and $D_b$ have complexity $(m-1, k-1)$. In fact, when smoothing $c$, the number of dichromatic illegal crossings $x_a$ in $D_a$ (resp. $x_b$ in $D_b$) is preserved, while its number of monochromatic crossings $z_a$ (resp. $z_b$) is reduced by one: $$x_a=x_b=x \quad \mbox{ and } \quad z_a = z_b = z-1.$$ Therefore, by induction hypothesis, the number of leaves in each of the subtrees is $2^{z-1} + x2^{m-1}$, so $T_D$ has $2(2^{z-1}+x2^{m-1})=2^z+x2^m$ leaves. 

The induction hypothesis also implies that the number of dichromatic AJ-states in each of $T_{D_a}$ and $T_{D_b}$ is $2^{z-1}$. Observe that these states are different, since in order to get $D_a$ from $D$, the crossing $c$ was smoothed following a $\bar{0}$-label, while to get $D_b$ it was smoothed following a $\bar{1}$-label. Hence, $T_D$ contains $2\cdot2^{z-1}=2^z$ dichromatic AJ-states.

To compute the number of monochromatic AJ-states, we distinguish two cases: 
\begin{enumerate}
    \item If $x_a = x_b = 0$, the induction hypothesis implies that the number of monochromatic AJ-states for $T_{D_a}$ and $T_{D_b}$ is $0$, and so is for $T_D$. 
    \item If $x_a=x_b > 0$, the induction hypothesis implies that the number of monochromatic AJ-states for each subtree $T_{D_a}$ and $T_{D_b}$ is $2^{m-1}$, all of them being different (since $c$ is smoothed differently in $D_a$ and in $D_b$), so $T_D$ contains $2\cdot2^{m-1}=2^m$ monochromatic AJ-states.
\end{enumerate}

\noindent \textbf{Case 2:} If $c$ is a dichromatic illegal crossing (this can happen only if $x>0$), then $T_D$ contains three subtrees $T_{D_a}$, $T_{D_b}$ and $T_{D_{d_2}}$, rooted at diagrams $D_a$, $D_b$ and $D_{d_2}$, respectively (see \autoref{smoothing}, where the role of $D_{d_1}$ is taken by $D$). By \autoref{low-complexity2}, $D_a$ and $D_b$ have complexities $(m-1, m-1)$, since all their crossings are monochromatic, while $D_{d_2}$ has complexity $(m, k-1)$.

Since $D_a$ and $D_b$ are monochromatic and each one has $m-1$ crossings, it follows that the number of leaves in $T_{D_a}$ is $2^{m-1}$, and in $T_{D_b}$ as well. All those leaves correspond to different states (since $c$ is smoothed differently in $D_a$ and in $D_b$), hence the total number of monochromatic states in both subtrees is $2^m$. They correspond to all possible monochromatic states that can be obtained from $D$. Therefore, (3) is shown.

For the dichromatic diagram $D_{d_2}$, we have the following parameters:
$$
 \quad x_{d_2} = x-1, \quad m_{d_2}  = m \quad \mbox{ and }  \quad z_{d_2}  = z.
$$

Hence, the induction hypothesis implies that the number of leaves in  $T_{D_{d_2}}$ is $2^z + (x-1) 2^{m}$. Adding the $2^m$ leaves of $D_a$ and $D_b$ proves statement (1). 

Finally, statement (2) follows by applying the induction hypothesis to $T_{D_{d_2}}$, since $D_a$ and $D_b$ are monochromatic, and therefore do not contribute with dichromatic AJ-states to $T_D$. 
\end{proof}

Given a $2$-tied link diagram $D$, we will consider maps $\sigma : X \to \{0, 1\}$, whose domain is the set of dichromatic illegal crossings $X$.  Recall that, in this case, $D_\sigma$ is the diagram obtained from $D$ by smoothing each crossing $c_i \in X$ according to $\sigma(c_i)$. Notice that $D_{\sigma}$ is monochromatic unless $X=\varnothing$, in which case $D_{\sigma}=D$.

\begin{definition}
Given a $2$-tied link diagram $D$, a pseudo-AJ-state of $D$ is a diagram $D_{\sigma}$ for some $\sigma\in \{0,1\}^{X}$. We denote the set of pseudo-AJ-states associated to $D$ as $ps(D)$. Notice that the number of pseudo-AJ-states is $2^{x}$.
\end{definition}

\begin{theorem}\label{teobicolor}
Let $D$ be a 2-tied link diagram with $m=x+y+z$ crossings. Then, its AJ-bracket $\langle\langle D  \rangle\rangle $ can be computed in terms of the classical Kauffman bracket $\langle\cdot  \rangle$ as follows:
\begin{equation*}
            \langle\langle  D \rangle\rangle  \ = \ (-1)^x\langle D_1\rangle\langle D_2\rangle \ c +\displaystyle\sum_{\sigma\in \{0,1\}^X}H_{k_\sigma}\langle D_{\sigma}\rangle,
\end{equation*}
where $D_1$ and $D_2$ are the subdiagrams of $D$ colored by $1$ and $2$ respectively, and $H_k=A^k+(-1)^{k+1}A^{-k}$ for every integer $k$.
\end{theorem}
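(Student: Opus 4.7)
The plan is to proceed by induction on $x$, the number of dichromatic illegal crossings of $D$.

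For the base case $x=0$, the set $\{0,1\}^X$ consists only of the empty map, which has $k_\sigma=0$ and $H_0=0$, so the formula reduces to $\langle\langle D\rangle\rangle = c\,\langle D_1\rangle\langle D_2\rangle$. I would establish this by iteratively applying axiom (5) to every crossing in $Z$: the resulting tree has $2^z$ leaves, each a dichromatic AJ-state indexed by a pair $(\tau_1,\tau_2)$ of Kauffman smoothings of $D_1$ and $D_2$, whose number of circles is precisely $k_1(\tau_1)+k_2(\tau_2)$. Applying the AJ-state bracket formula given in \autoref{sec:preliminaries}, each leaf contributes $c(-A^2-A^{-2})^{k_1(\tau_1)+k_2(\tau_2)-2}$ with edge weight $A^{e_{\tau_1}+e_{\tau_2}}$, and the double sum factorizes into $c\,\langle D_1\rangle\,\langle D_2\rangle$ via the classical state-sum formula \eqref{suma-estados}.

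For the inductive step, I choose any $c\in X$ and apply axiom (6), which gives $\langle\langle D\rangle\rangle = \delta\langle D_a\rangle + \delta\langle D_b\rangle - \langle\langle D_{d_2}\rangle\rangle$. The diagrams $D_a$ and $D_b$ are monochromatic, so their AJ-brackets agree with their classical Kauffman brackets; and $D_{d_2}$ is $2$-tied with $(D_{d_2})_1=D_1$, $(D_{d_2})_2=D_2$, but with only $x-1$ dichromatic illegal crossings, so the inductive hypothesis applies to it. To move every term onto the basis $\{\langle D_\sigma\rangle\}_{\sigma\in\{0,1\}^X}$, I would apply the Kauffman skein to the crossings of $X\setminus\{c\}$ in $\langle D_a\rangle$ and $\langle D_b\rangle$ (all now monochromatic after the color-merge), and separately to the remnant of $c$ inside each $(D_{d_2})_{\sigma'}$ (where it is monochromatic but with its over/under swapped, so that $\bar 0$ and $\bar 1$ exchange roles). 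This expansion yields $\langle D_a\rangle = \sum_{\sigma(c)=0} A^{k_\sigma-1}\langle D_\sigma\rangle$, $\langle D_b\rangle = \sum_{\sigma(c)=1} A^{k_\sigma+1}\langle D_\sigma\rangle$, and $\langle(D_{d_2})_{\sigma'}\rangle = A\langle D_{(\sigma',c\mapsto 1)}\rangle + A^{-1}\langle D_{(\sigma',c\mapsto 0)}\rangle$.

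Putting the three expansions together, the constant piece $-(-1)^{x-1}c\langle D_1\rangle\langle D_2\rangle$ coming from the inductive hypothesis equals $(-1)^x c\langle D_1\rangle\langle D_2\rangle$, matching the first summand of the target formula. The coefficient of each $\langle D_\sigma\rangle$ collects to $\delta A^{k_\sigma-1} - A^{-1}H_{k_\sigma-1}$ if $\sigma(c)=0$ and to $\delta A^{k_\sigma+1} - A\,H_{k_\sigma+1}$ if $\sigma(c)=1$. The main (though elementary) technical obstacle is to verify that both expressions simplify to $H_{k_\sigma}$; expanding $\delta = A + A^{-1}$ and $H_k = A^k + (-1)^{k+1}A^{-k}$, both reduce to $A^{k_\sigma} + (-1)^{k_\sigma+1}A^{-k_\sigma} = H_{k_\sigma}$. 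This algebraic collapse is precisely what makes the full state sum close up, and once it is in hand, assembling all the pieces produces the claimed formula.
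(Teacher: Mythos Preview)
Your proof is correct, and it takes a genuinely different route from the paper's.

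The paper proves the theorem by constructing one specific resolution tree: it first processes the crossings of $X=\{c_1,\dots,c_x\}$ in order, applying type-$2$ smoothings along a ``spine'' $D=D_1'\to D_2'\to\cdots\to D_{x+1}'$, with two monochromatic branches $D_j^0,D_j^1$ splitting off at each step. Every pseudo-AJ-state $D_\sigma$ then appears exactly once in each of the $x$ subtrees $T_1,\dots,T_x$, and the paper computes its total contribution by a telescoping argument (its Claims~1 and~2) that collapses the sum $\sum_{j=1}^x(-1)^{j-1}(A^{a_{\sigma_{(j)}}}+A^{b_{\sigma_{(j)}}})$ to $H_{k_\sigma}$.

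Your argument replaces this global tree analysis with an induction on $x$: one application of axiom~(6) at a single crossing $c\in X$, plus the inductive hypothesis for $D_{d_2}$, followed by a short local identity ($\delta A^{k_\sigma\mp 1}-A^{\mp 1}H_{k_\sigma\mp 1}=H_{k_\sigma}$). This is cleaner and avoids the bookkeeping of the tuples $\sigma_{(j)}$, $e_{\sigma_{(j)}}$, $a_{\sigma_{(j)}}$, $b_{\sigma_{(j)}}$. The trade-off is that the paper's explicit tree and its telescoping computation are reused verbatim in Section~\ref{section_tricolor} (the $3$-tied case), where sums of the same shape reappear for each of $X_{1,2}$, $X_{1,3}$, $X_{2,3}$, $Y_{2,3}$; your inductive packaging would need to be partially unwound there. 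So the paper's approach is an investment that pays off later, while yours is the shorter self-contained proof of the present statement.
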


\begin{proof}
Let $X=\lbrace c_1,\dots,c_x\rbrace$ be the set of dichromatic illegal crossings of $D$. We will consider a resolution tree $T_D$ of $D$ whose first levels are constructed as follows:

\begin{figure}[h!]
   \centering
    \includegraphics[scale=0.9]{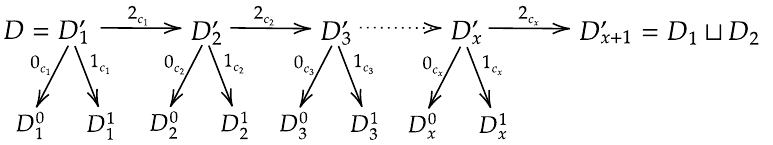}
    \caption{A sketch of the resolution tree of $D$ used in the proof of \autoref{teobicolor}. }
    \label{concretetree}
\end{figure}

First, we describe how to get the nodes of $T_D$  illustrated in \autoref{concretetree}: Starting from the root $D=D_1'$, we first smooth the crossings in $X$ in ascending order. More precisely, for $i\in \{1, \ldots, x\}$, in step $i$, we smooth the dichromatic illegal crossing $c_i$ of $D_i'$, giving rise to two monochromatic diagrams $D_i^0$ and $D_i^1$ obtained after applying a type $0$ and type $1$ smoothing to $c_i$, respectively, and a $2$-tied link diagram $D'_{i+1}$, obtained after applying a type $2$ smoothing. In the diagram $D_{x+1}'$, all dichromatic crossings are legal, hence $D_{x+1}'$ is equivalent via Reidemeister moves II and III to the $2$-tied link diagram $D_1\sqcup D_2$, and therefore $\langle\langle  D_{x+1}' \rangle\rangle  = \langle D_1\rangle\langle D_2\rangle c$. Hence, the contribution of $D_{x+1}'$ to the AJ-bracket of $D$ is
\begin{equation}\label{2-state}
    (-1)^{x} \langle D_1\rangle\langle D_2\rangle c.
\end{equation}

Notice that, if $X=\varnothing$, then $D=D_{x+1}'=D_1'$ and $\langle\langle  D \rangle\rangle = \langle D_1\rangle\langle D_2\rangle c$. In this case we have $k_\sigma = 0$ for the only possible map $\sigma$, hence $H_{k_\sigma}=A^{k_\sigma}+(-1)^{k_{\sigma}+1}A^{-k_{\sigma}}=1-1=0$, and the result holds in this case. So we can assume that $X\neq \varnothing$.

For each $j \in \{1, \ldots, x\}$, we define a subtree $T_j$ of $T_D$ rooted at $D_j'$ as follows: the children of the root are the immediate descendants $D_j^0$ and $D_j^1$ (i.e. we deliberately exclude from $T_j$ the third descendant $D_{j+1}'$). In the subsequent $x-1$ steps, we iteratively apply all possible smoothings of types $\overline{0}$ and $\overline{1}$ to the crossings in the set $X\setminus\{c_j\}$. The resulting subtree $T_j$ thus contains $2^x$ distinct pseudo-AJ-states. For our purposes, we assume that we interrupt the construction of the full resolution tree $T_D$ at this stage, i.e., we assume that $T_j$ contains $2^x$ leaves.

Observe that the set of pseudo-AJ-states of the subtree $T_j$ does not depend on $j$, since each of them is obtained from $D$ by forgetting the colors and smoothing the crossings in $X$ in all possible ways.  

Now, given a map $\sigma : X \to \{0, 1\}$, define the vertex set  $$V_\sigma=\lbrace V_{\sigma,j}, \quad   1\leq j\leq x \rbrace,$$ where $V_{\sigma,j}$ is the vertex of $T_j$ associated to diagram $D_{\sigma}$. The diagram $D_{\sigma}$ appearing in vertex $V_{\sigma,j}$ is obtained after applying the following sequence of transformations to diagram $D$:

\begin{enumerate}
    \item Apply smoothings of type 2 to the crossings $c_1,\dots,c_{j-1}$.
    \item Apply a smoothing of type $\sigma(c_j)$ to the crossing $c_j$. 
    \item For $k\in\{j+1,\dots,x\}$, apply a smoothing of type $\overline{\sigma(c_k)}$ to $c_k$.
    \item For $k\in\{1,\dots,j-1\}$, apply a smoothing of type $\overline{1-\sigma(c_k)}$ to $c_k$ (recall that in step (1) crossing $c_k$ was mirrored).
    \end{enumerate}
    
    The previous sequence of smoothings is summarized in the $x$-tuple  
$$
\sigma_{(j)}=(\overline{1-\sigma(c_1)},\dots,\overline{1-\sigma(c_{j-1})}, \, \sigma(c_j), \, \overline{\sigma(c_{j+1})},\dots,\overline{\sigma(c_x)}),
$$
which represents a function in $\{ 0, 1, \overline{0},\overline{1}\}^X$. Recall that in \autoref{notation} we defined
$e_{\sigma_{(j)}}= \#(\sigma_{(j)}^{-1}(\overline{0}))-\#(\sigma_{(j)}^{-1}(\overline{1}))$.

Now we analyze the contribution $P_{\sigma,j}$ of the descendants of the vertex $V_{\sigma,j}$ to the AJ-bracket of $D$. Since the associated diagram $D_\sigma$ is monochromatic, $\langle\langle D_{\sigma}\rangle\rangle = \langle D_{\sigma}\rangle$. Hence, the contribution is equal to $\langle D_{\sigma}\rangle$ multiplied by the product of all edge's labels in the unique path connecting $V_{\sigma,j}$ to the root of $T_D$. More precisely:

\begin{align*}
    P_{\sigma,j}&=(-1)^{j-1}(A+A^{-1})A^{e_{\sigma_{(j)}}} \,  \langle D_\sigma \rangle \\
    &=(-1)^{j-1}(A^{e_{\sigma_{(j)}}+1}+A^{e_{\sigma_{(j)}}-1}) \,  \langle D_\sigma \rangle \\
    &=(-1)^{j-1}(A^{a_{\sigma_{(j)}}}+A^{b_{\sigma_{(j)}}}) \,  \langle D_\sigma \rangle,
\end{align*}

\noindent where $a_{\sigma_{(j)}}$ and $b_{\sigma_{(j)}}$ are defined by:\\

$a_{\sigma_{(j)}} = \left\{ \begin{array}{cl}
e_{\sigma_{(j)}}+1 & \text{ if } \ \sigma(c_j)=0, \\
e_{\sigma_{(j)}}-1 & \text{ if } \ \sigma(c_j)=1;
\end{array} \right.$ \ and  \ 
$b_{\sigma_{(j)}} = \left\{ \begin{array}{cl}
e_{\sigma_{(j)}}-1 & \text{ if } \ \sigma(c_j)=0, \\
e_{\sigma_{(j)}}+1 & \text{ if } \ \sigma(c_j)=1. 
\end{array} \right.$\\ \\

\begin{claim}\label{claim1}
For every $j \in \{1,\dots,x-1\}$,  one has $b_{\sigma_{(j)}}=a_{\sigma_{(j+1)}}$.
\end{claim} 
\begin{proof}
The $x$-tuples $\sigma_{(j)}$ and $\sigma_{(j+1)}$ are equal in all coordinates but the ones in positions $j$ and $j+1$. The proof of the claim consists of a case-by-case analysis of the four possibilities for these coordinates in $\sigma_{(j)}$ and $\sigma_{(j+1)}$ (we omit writing the coordinates in other positions): 

\begin{enumerate}
	\item If $\sigma(c_j)=0=\sigma(c_{j+1})$,  then $\sigma_{(j)}=(\dots,0,\overline{0},\dots)$ and $\sigma_{(j+1)}=(\dots,\overline{1},0,\dots)$. Hence $e_{\sigma_{(j+1)}}-e_{\sigma_{(j)}}=-2$, and therefore $b_{\sigma_{(j)}}=e_{\sigma_{(j)}}-1=e_{\sigma_{(j+1)}}+1=a_{\sigma_{(j+1)}}$. \\
	
	\item If $\sigma(c_j)=1=\sigma(c_{j+1})$, then $\sigma_{(j)}=(\dots,1,\overline{1},\dots)$ and $\sigma_{(j+1)}=(\dots,\overline{0},1,\dots)$. Hence $e_{\sigma_{(j+1)}}-e_{\sigma_{(j)}}=2$, and therefore $b_{\sigma_{(j)}}=e_{\sigma_{(j)}}+1=e_{\sigma_{(j+1)}}-1=a_{\sigma_{(j+1)}}$. \\
	
	\item  If $\sigma(c_j)=0$ and $\sigma(c_{j+1})=1$, then $\sigma_{(j)}=(\dots,0,\overline{1},\dots)$ and $\sigma_{(j+1)}=(\dots,\overline{1},1,\dots)$. Hence $e_{\sigma_{(j)}}=e_{\sigma_{(j+1)}}$, and therefore $b_{\sigma_{(j)}}=e_{\sigma_{(j)}}-1=e_{\sigma_{(j+1)}}-1=a_{\sigma_{(j+1)}}$. \\
	
	\item  If $\sigma(c_j)=1$ and $\sigma(c_{j+1})=0$, then $\sigma_{(j)}=(\dots,1,\overline{0},\dots)$ and $\sigma_{(j+1)}=(\dots,\overline{0},0,\dots)$. Hence $e_{\sigma_{(j)}}=e_{\sigma_{(j+1)}}$, and therefore $b_{\sigma_{(j)}}=e_{\sigma_{(j)}}+1=e_{\sigma_{(j+1)}}+1=a_{\sigma_{(j+1)}}$.  
\end{enumerate}
\end{proof}

\begin{claim}\label{claim2}
For every map $\sigma: X \to \{0,1\}$, one has	$a_{\sigma_{(1)}}=k_\sigma=-b_{\sigma_{(x)}}$, where we recall that $k_\sigma=\#(\sigma^{-1}(0))-\#(\sigma^{-1}(1))$.
\end{claim}
\begin{proof}
For every $i \in \{1, \ldots, x\}$, write $\sigma(c_i) = s_i$. 

We first analyze $a_{\sigma_{(1)}}$. To do this, observe that  $\sigma_{(1)}=(s_1,\overline{s_2},\dots,\overline{s_{x-1}},\overline{s_{x}})$, and therefore:

\begin{enumerate}
	\item 	If $s_1=0$, then 
$$
   a_{\sigma_{(1)}}=e_{\sigma_{(1)}}+1=\#(\sigma_{(1)}^{-1}(\overline{0}))- \#(\sigma_{(1)}^{-1}(\overline{1}))+1=\#(\sigma^{-1}(0))-\#(\sigma^{-1}(1))=k_\sigma.
$$ 
	\item  If $s_1=1$, then 
$$
    a_{\sigma_{(1)}}=e_{\sigma_{(1)}}-1=\#(\sigma_{(1)}^{-1}(\overline{0}))-
    \#(\sigma_{(1)}^{-1}(\overline{1}))-1=\#(\sigma^{-1}(0))-\#(\sigma^{-1}(1))=k_\sigma.
$$
	
\end{enumerate} 
	
To analyze $b_x$, observe that  $\sigma_{(x)}=(\overline{1-s_1},\overline{1-s_2},\dots,\overline{1-s_{x-1}},\, s_{x})$, and therefore:

\begin{enumerate}
	\item If $s_x=0$, then
$$
   b_{\sigma_{(x)}}=e_{\sigma_{(x)}}-1=\#(\sigma_{(x)}^{-1}(\overline{0}))-
   \#(\sigma_{(x)}^{-1}(\overline{1}))-1=\#(\sigma^{-1}(1))-\#(\sigma^{-1}(0))=-k_\sigma.
$$ 
	
\item If $s_x=1$, then 
$$
   b_{\sigma_{(x)}}=e_{\sigma_{(x)}}+1=\#(\sigma_{(x)}^{-1}(\overline{0}))-
   \#(\sigma_{(x)}^{-1}(\overline{1}))+1=\#(\sigma^{-1}(1))-\#(\sigma^{-1}(0))=-k_\sigma.
$$
\end{enumerate}
\end{proof}

We now collect in the polynomial $P_\sigma$ the contributions of all descendants of the vertices $V_{\sigma,j}$ for $j=1,\ldots,x$, which are precisely the vertices of $T_D$ whose associated diagram is $D_{\sigma}$:
\begin{align*}
    P_{\sigma}&=\sum_{j=1}^xP_{\sigma,j}   \\
    &=\sum_{j=1}^x(-1)^{j-1}(A^{a_{\sigma_{(j)}}}+A^{b_{\sigma_{(j)}}}) \,  \langle D_\sigma \rangle \\
    &=\left(A^{a_{\sigma_{(1)}}}+\sum_{j=1}^{x-1}(-1)^{j-1}(A^{b_{\sigma_{(j)}}}-A^{a_{\sigma_{(j+1)}}})+(-1)^{x-1}A^{b_{\sigma_{(x)}}} \right) \,  \langle D_\sigma \rangle\\
    &= \left( A^{a_{\sigma_{(1)}}}+(-1)^{x-1}A^{b_{\sigma_{(x)}}} \right) \,  \langle D_\sigma \rangle,
\end{align*}

\noindent where we used Claim~\ref{claim1} in the last equality.

Finally, using Claim~\ref{claim2} and noting that $x$ and $k_\sigma$ have the same parity, we obtain:
\begin{equation*}
    P_{\sigma}= \left(A^{k_\sigma}+(-1)^{k_\sigma+1}A^{-k_\sigma} \right) \, \langle D_\sigma \rangle = H_{k_\sigma}\langle D_\sigma \rangle .
\end{equation*}

Taking the sum over all possible $\sigma\in \{0,1\}^{X}$ and including the expression \eqref{2-state}  completes the proof of the theorem. 
\end{proof}

\section{State sum model for 3-tied links}\label{section_tricolor}

\subsection{Trichromatic case} 

Let $D$ be a 3-tied link diagram with $m=x_{1,2}+x_{1,3}+x_{2,3}+y_{1,2}+y_{1,3}+y_{2,3}+z_1+z_2+z_3$ crossings (see \autoref{notation} for notation). We order the sets of crossings as follows $$X_{1,2}<X_{1,3}<X_{2,3}<Y_{1,2}<Y_{1,3}<Y_{2,3}<Z,$$

\noindent and we order the crossings in such a way that if $c_i\in A$ and $c_j\in B$ with $A<B$, then $i<j$.

Given the previous enumeration of the crossings of $D$, we construct a particular resolution tree ($T_D$) imposing that, at each vertex, the dichromatic illegal crossing with the smallest index is smoothed, and if no illegal dichromatic crossing remains, then the monochromatic crossing with the smallest index is smoothed.

It is important to remark that the sets $X_{i,j}$, $Y_{i,j}$ and $Z$, and the order of the crossings, are established once and for all at the root $D$ of the tree, but that the set of dichromatic illegal, dichromatic legal, or monochromatic crossings may vary at distinct vertices of $T_D$. For example, a dichromatic illegal crossing $c$ in a vertex $v$ of $T_D$ may belong to $Y$ (if $c$ was dichromatic legal in $D$), or a monochromatic crossing in $v$ may belong to $X$ (if it was dichromatic illegal in $D$). 

We know that in every resolution tree of a 3-tied link diagram, for every leaf, there are at most two smoothings of type 0 or 1 in the path connecting it to the root, because these smoothings fuse two colors into one. Then, the leaves (corresponding to not neccessarily different AJ-states) of $T_D$ can be classified into 7 families, that we describe by indicating the pair of colors (of $D$) involved in each of the aforementioned smoothings. 

\begin{itemize}
    \item $\Gamma_1:$ $\varnothing$
    \item $\Gamma_2:$ $\{(1,2)\}$
    \item $\Gamma_3:$ $\{(1,3)\}$
    \item $\Gamma_4:$ $\{(2,3)\}$
    \item $\Gamma_5:$ $\{(1,2),(1,3)\}$
    \item $\Gamma_6:$ $\{(1,2),(2,3)\}$
    \item $\Gamma_7:$ $\{(1,3),(2,3)\}$
\end{itemize}

For example, $\Gamma_6$ is the set of leaves obtained from the root by applying a sequence of smoothings including a smoothing of type 0 or 1 to a crossing from $X_{1,2}$ and another one to a crossing in $X_{2,3}$. On the other hand, in the path from $D$ to a leaf in $\Gamma_7$, some smoothing of type $0$ or $1$ has been applied to a crossing from $X_{1,3}$; after such a smoothing, components originally colored by color $3$ becomes colored by $1$, hence the crossings from $X_{2,3}$ become legal and the crossings from $Y_{2,3}$ become illegal; then there is a smoothing of type $0$ or $1$ applied to an element of $Y_{2,3}$. 

In \autoref{coloredtrees}, both the resolution tree described above and the sets of leaves $\Gamma_i$ are schematized. The notation $\{i,j\}$ over a horizontal edge indicates that all illegal crossings in $X_{i,j}\cup Y_{i,j}$ have been smoothed by a smoothing of type $2$, while the notation $(i,j)$ over a vertical edge indicates that a smoothing of type $0$ or $1$ has been performed in a dichromatic illegal crossing from $X_{i,j}\cup Y_{i,j}$. Note that to obtain the leaves in each family $\Gamma_i$ all monochromatic crossings must be smoothed. 

\begin{figure}[h!]
   \centering
    \includegraphics[scale=0.6]{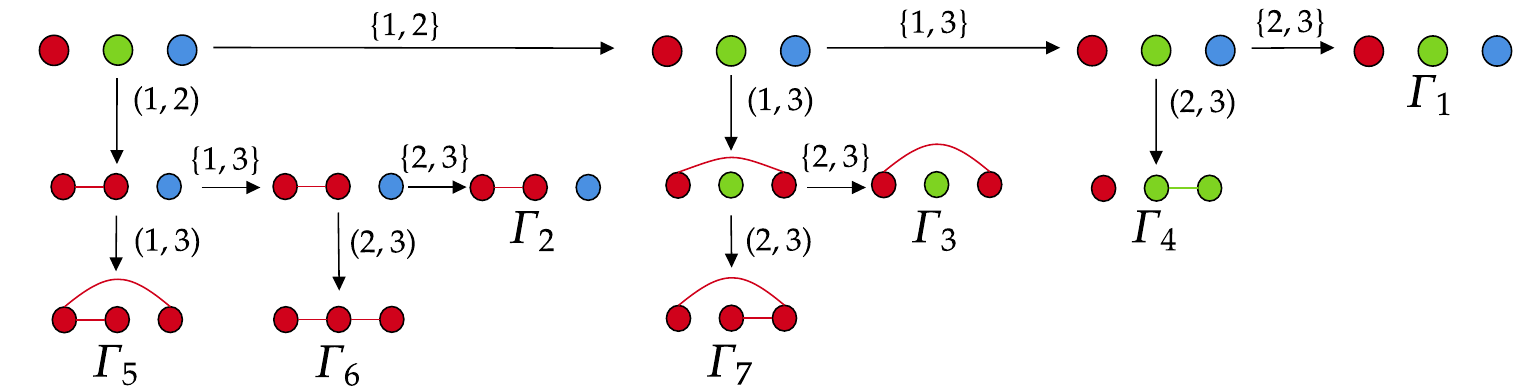}
    \caption{}
    \label{coloredtrees}
    \end{figure}

With the previous notation, we note that the leaves in $\Gamma_1$ are trichromatic, the leaves in $\Gamma_2$, $\Gamma_3$ and $\Gamma_4$ are dichromatic, and the leaves in $\Gamma_5$, $\Gamma_6$ and $\Gamma_7$ are monochromatic.

\begin{prop}\label{tricolor}
Let $D$ be a 3-tied link diagram with $m=x_{1,2}+x_{1,3}+x_{2,3}+y_{1,2}+y_{1,3}+y_{2,3}+z_1+z_2+z_3$ crossings. Then: 

\begin{enumerate}
    \item $\Gamma_1$ contains $2^z$ leaves and $2^z$ AJ-states.
    \item $\Gamma_2$ contains $x_{1,2}\cdot2^{x_{1,2}+y_{1,2}+z}$ leaves and, if $\Gamma_2\neq \varnothing$, $2^{x_{1,2}+y_{1,2}+z}$ AJ-states.
    \item $\Gamma_3$ contains $x_{1,3}\cdot2^{x_{1,3}+y_{1,3}+z}$ leaves and, if $\Gamma_3\neq \varnothing$, $2^{x_{1,3}+y_{1,3}+z}$ AJ-states.
    \item $\Gamma_4$ contains $x_{2,3}\cdot2^{x_{2,3}+y_{2,3}+z}$ leaves and, if $\Gamma_4\neq \varnothing$, $2^{x_{2,3}+y_{2,3}+z}$ AJ-states.
    \item $\Gamma_5$ contains $x_{1,2}\cdot x_{1,3}\cdot2^m$ leaves and, if $\Gamma_5\neq \varnothing$, $2^m$ AJ-states.
    \item $\Gamma_6$ contains $x_{1,2}\cdot x_{2,3}\cdot2^m$ leaves and, if $\Gamma_6\neq \varnothing$, $2^m$ AJ-states.
    \item $\Gamma_7$ contains $x_{1,3}\cdot y_{2,3}\cdot2^m$ leaves and, if $\Gamma_7\neq \varnothing$, $2^m$ AJ-states.
\end{enumerate}
\end{prop}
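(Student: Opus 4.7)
The plan is to follow the specific resolution tree $T_D$ defined above and to classify every leaf by the sequence of colour-mergings performed along its root-to-leaf path; these sequences are exactly indexed by the seven families $\Gamma_1,\ldots,\Gamma_7$. For each family I would describe the backbone of the corresponding paths (which crossings are smoothed, in which order, and with which smoothing type), multiply the binary choices at each branching to count the leaves, and then observe that different paths may produce the same AJ-state because a type-$2$ smoothing followed later by a $\bar 0/\bar 1$ smoothing on the same crossing reproduces the $\bar 1/\bar 0$ smoothing of the original. This last observation is what produces the $x_{i,j}$-fold (or $y_{2,3}$-fold, in $\Gamma_7$) multiplicity of each AJ-state among the leaves of the family.

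For $\Gamma_1$ no merging is performed: every dichromatic illegal crossing is deterministically type-$2$-smoothed, and the only branching comes from the $\bar 0/\bar 1$ choices at the $z$ monochromatic crossings, giving $2^z$ leaves, all distinct AJ-states. For the single-merging families $\Gamma_r$ ($r\in\{2,3,4\}$) with colour pair $(i,j)$, the merging occurs at one of the $x_{i,j}$ crossings in $X_{i,j}$ (factor $x_{i,j}$), with $2$ type choices at the merging; all earlier $X_{i,j}$ crossings have been type-$2$-smoothed deterministically. After the merging, the dichromatic illegal crossings coming from the other $X_{p,q}$'s are type-$2$-smoothed (to stay in $\Gamma_r$), and the remaining $(x_{i,j}-1)+y_{i,j}+z$ monochromatic crossings each admit $2$ smoothings, giving $2^{(x_{i,j}-1)+y_{i,j}+z}$ branches. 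The product yields $x_{i,j}\cdot 2^{x_{i,j}+y_{i,j}+z}$ leaves, and the AJ-states are parametrised by the $D_a$-versus-$D_b$ choice at each crossing of $X_{i,j}\cup Y_{i,j}\cup Z$, giving $2^{x_{i,j}+y_{i,j}+z}$ distinct AJ-states, each of multiplicity $x_{i,j}$.

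For $\Gamma_5$ and $\Gamma_6$ the argument is iterated over two mergings: the first on $X_{1,2}$ (factor $x_{1,2}$) and the second on $X_{1,3}$ or $X_{2,3}$ respectively, whose crossings remain dichromatic illegal after the first merging (with the upper colour $1$ simply relabelling the upper colour $2$). The subtle case is $\Gamma_7$, where after a merging on $X_{1,3}$ the crossings in $X_{2,3}$ (upper colour $2$, lower colour $3$) become dichromatic legal while those in $Y_{2,3}$ (upper colour $3$, lower colour $2$) become dichromatic illegal; consequently the second merging must happen on an element of $Y_{2,3}$, explaining the factor $y_{2,3}$ in place of $x_{2,3}$. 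In each of these three families the two merging-type choices together with the $2^{m-2}$ choices at the final monochromatic stage give $2^m$ leaves per pair of merging positions, and the AJ-states are parametrised by $D_a/D_b$ choices at every original crossing, yielding $2^m$ distinct AJ-states. The main technical obstacle is this post-merging re-bookkeeping of the classes $X$ and $Y$; once the legal/illegal swap caused by each colour fusion is verified by inspecting the upper/lower colour convention at each crossing type, the remaining enumeration reduces to a direct product of binary branchings.
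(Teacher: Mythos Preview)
Your approach is essentially the same as the paper's: both proofs enumerate the root-to-leaf paths in the prescribed tree $T_D$ by tracking which crossings receive a type-$0/1$ smoothing and which receive deterministic type-$2$ smoothings, then multiply the binary branchings. One small inaccuracy worth fixing: in your uniform treatment of $\Gamma_2,\Gamma_3,\Gamma_4$ you write that ``the dichromatic illegal crossings coming from the other $X_{p,q}$'s are type-$2$-smoothed'' after the merge, but for $\Gamma_3$ this is not what happens---after fusing colours $1$ and $3$, the crossings in $X_{2,3}$ become \emph{legal} (upper $2$, lower $1$) while those in $Y_{2,3}$ become illegal and must be type-$2$-smoothed, exactly the swap you correctly invoke for $\Gamma_7$. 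This does not affect your leaf count (the type-$2$ steps are deterministic either way, and you correctly identify the $(x_{1,3}-1)+y_{1,3}+z$ monochromatic branchings), but the description should reflect it.
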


\begin{proof}
First, observe that in order to obtain the leaves in $\Gamma_1$, we must perform, in the prescribed order, smoothings of type 2 at each illegal dichromatic crossing in $X_{1,2}\cup X_{1,3}\cup X_{2,3}$, followed by smoothings of type $\bar{0}$ or $\bar{1}$ at each crossing in $Z$. The different choices of $\bar{0}$ or $\bar{1}$ for each crossing in $Z$ produce all the vertices of $\Gamma_1$, which are all distinct. Consequently, we obtain $2^z$ leaves, corresponding to the same number of AJ-states.

To enumerate the possible paths in $T_D$ going from $D$ to a vertex in $\Gamma_2$, we have exactly the following choices: which crossing $c_i\in X_{1,2}$ is smoothed by a smoothing of type 0 or 1 (there are $x_{1,2}$ choices), the type of smoothing applied to $c_i$ (either 0 or 1), and the type of smoothing (either $\bar{0}$ or $\bar{1}$) applied to the monochromatic crossings, which are precisely the crossings in $(X_{1,2}\setminus\{c_i\})\cup Y_{1,2}\cup Z$ (notice that, after the smoothing of $c_i$, all other crossings in $X_{1,2}$ and $Y_{1,2}$ become monochromatic). There is no other possible choice, as the original order of the crossings determines the order in which the smoothings are applied, and all crossings in $X_{1,3}$ and $X_{2,3}$ must be smoothed using a smoothing of type 2. It follows that there are exactly $x_{1,2}\cdot 2^{x_{1,2}+y_{1,2}+z}$ vertices in $\Gamma_2$. If $\Gamma_2\neq \varnothing$, that is, if $x_{1,2}\neq 0$, we notice that the final state is determined by the type of smoothing applied to each crossing of $X_{1,2}\cup Y_{1,2}\cup Z$, but not on the particular choice of the crossing $c_i$. Therefore, if $\Gamma_2\neq \varnothing$ the number of states is $2^{x_{1,2}+y_{1,2}+z}$.

The arguments for $\Gamma_3$ and $\Gamma_4$ are analogous to that for $\Gamma_2$.

Now we can enumerate the paths in $T_D$ going from $D$ to a leaf in $\Gamma_5$. The choices that determine each path are the following: a crossing $c_i\in X_{1,2}$ to which apply a smoothing of type 0 or 1 ($x_{1,2}$ choices); the type of smoothing applied to $c_i$ (2 choices); a crossing $c_j\in X_{1,3}$ to which apply a smoothing of type 0 or 1 ($x_{1,3}$ choices); the type of smoothing applied to $c_j$ (2 choices); the type of smoothing ($\bar{0}$ or $\bar{1}$) applied to each monochromatic crossing. The last step is applied to all $m$ crossings except $c_i$ and $c_j$, since they all become monochromatic after the smoothings of $c_i$ and $c_j$. Therefore, the total number of leaves in $\Gamma_5$ is $x_{1,2}\cdot x_{1,3}\cdot 2^{m}$. If $\Gamma_5\neq \varnothing$, that is, if $x_{1,2}\cdot x_{1,3}\neq 0$, the resulting AJ-state at the end of each path depends only on the choices of smoothings (0 or 1 for $c_i$ and $c_j$, $\bar{0}$ or $\bar{1}$ for the remaining crossings), and not on the choices of $c_i$ and $c_j$. Therefore, if $\Gamma_5\neq \varnothing$ the number of states is $2^m$. 

For $\Gamma_6$ and $\Gamma_7$, the procedure is analogous to that for $\Gamma_5$. However, in the case of $\Gamma_7$, note that when a smoothing of type 0 or 1 is performed at a crossing in $X_{1,3}$, the crossings in $X_{2,3}$ become legal and the crossings in $Y_{2,3}$ become illegal. Therefore, the second smoothing of type 0 or 1 must be applied to a crossing in $Y_{2,3}$.
\end{proof}

\begin{cor}
Let $D$ be a 3-tied link diagram with $m=x_{1,2}+x_{1,3}+x_{2,3}+y_{1,2}+y_{1,3}+y_{2,3}+z_1+z_2+z_3$ crossings. Then, for $T_D$: 
\begin{enumerate}
        \item The number of trichromatic AJ-states is $2^z$.
        \item The number of dichromatic AJ-states is 
        $$
            (\alpha_{1,2}2^{x_{1,2}+y_{1,2}}+\alpha_{1,3}2^{x_{1,3}+y_{1,3}}+\alpha_{2,3}2^{x_{2,3}+y_{2,3}})2^z,
        $$ 
        where $\alpha_{i,j}=1$ if $x_{i,j}>0$ and $\alpha_{i,j}=0$ otherwise. 
        \item The number of monochromatic AJ-states is either $2^m$ or 0.
\end{enumerate}
\end{cor}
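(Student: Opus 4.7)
The plan is to deduce each of the three statements from \autoref{tricolor} by sorting the AJ-states of $T_D$ according to which of the seven families $\Gamma_1, \ldots, \Gamma_7$ they belong to, and by checking that families contributing AJ-states of the same chromatic type yield disjoint sets. By construction, the chromatic type of a leaf is determined by its family: $\Gamma_1$ is trichromatic, $\Gamma_2$, $\Gamma_3$, $\Gamma_4$ are dichromatic, and $\Gamma_5$, $\Gamma_6$, $\Gamma_7$ are monochromatic.

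For item (1), only $\Gamma_1$ contributes trichromatic AJ-states, since any of the remaining families involves at least one smoothing of type $0$ or $1$ that merges two colors. The count is therefore read directly from \autoref{tricolor}(1), giving $2^z$.

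For item (2), dichromatic AJ-states come only from $\Gamma_2 \cup \Gamma_3 \cup \Gamma_4$, and I need to show that these three sets of AJ-states are pairwise disjoint so that their cardinalities add. I would track the resulting colors: a $\Gamma_2$-state uses the color pair $\{1,3\}$ (since a type $0$ or $1$ smoothing on a crossing in $X_{1,2}$ recolors every originally $2$-colored component as $1$, following the convention of \autoref{low-complexity}), whereas both $\Gamma_3$- and $\Gamma_4$-states use the pair $\{1,2\}$; hence $\Gamma_2$ is disjoint from the other two families. To separate $\Gamma_3$ from $\Gamma_4$, I would observe that whenever both are nonempty, $D$ has components originally colored $3$ (since $\Gamma_4 \neq \varnothing$ forces $x_{2,3}>0$); in a $\Gamma_3$-state those components are relabeled with color $1$, while in a $\Gamma_4$-state they are relabeled with color $2$, making the two diagrams inequivalent as tied link diagrams. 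Adding the AJ-state counts from \autoref{tricolor}(2)--(4), each weighted by $\alpha_{i,j}$, then yields the announced formula.

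For item (3), monochromatic AJ-states arise only from $\Gamma_5 \cup \Gamma_6 \cup \Gamma_7$. Every such state is a Kauffman state of the underlying classical diagram, so there are at most $2^m$ of them. On the other hand, \autoref{tricolor}(5)--(7) says that each nonempty $\Gamma_i$ with $i\in\{5,6,7\}$ already contributes $2^m$ distinct AJ-states, so a single nonempty family must exhaust all monochromatic AJ-states; hence the total is $2^m$ if any of these three families is nonempty and $0$ otherwise. The main obstacle is the disjointness argument in item (2); after that point the corollary is essentially a repackaging of \autoref{tricolor}, and I expect to spend most of the care on confirming that, whenever $\Gamma_3$ and $\Gamma_4$ are both nonempty, there really is an originally $3$-colored component of $D$ witnessing the difference in coloring.
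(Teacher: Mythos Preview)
Your proposal is correct and follows essentially the same approach as the paper: both deduce the corollary directly from \autoref{tricolor} by grouping the families $\Gamma_i$ according to chromatic type. Your treatment of item~(2) is in fact more explicit than the paper's, which simply asserts that $\Gamma_2$, $\Gamma_3$, $\Gamma_4$ contain AJ-states ``with components in different partition blocks''; your color-tracking argument (and the observation that $x_{2,3}>0$ guarantees a $3$-colored component) unpacks this claim, and one could also note that the underlying classical diagrams already differ since different crossing sets are smoothed in each family.
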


\begin{proof}
All three items follow from \autoref{tricolor}.
(1) is straightforward.
(2) is obtained by adding the number of AJ-states in $\Gamma_2$, $\Gamma_3$, and $\Gamma_4$, since each set contains AJ-states with components in different partition blocks. The number $\alpha_{i,j}$ determines whether the corresponding set $\Gamma_k$ is empty or not.
(3) follows from the fact that if $\Gamma_i$ and $\Gamma_j$, with $i,j\in\{5,6,7\}$ are both nonempty, their corresponding AJ-states are the same, as in both cases all crossings of $D$ are smoothed. Hence, if some $\Gamma_i$ with $i\in\{5,6,7\}$ is nonempty, the number of monochromatic AJ-states is $2^m$, and if $\Gamma_5=\Gamma_6=\Gamma_7=\varnothing$, the number of monochromatic AJ-states is 0.
\end{proof}

If $D$ is a 3-tied link diagram with $D_1$, $D_2$ and $D_3$ the subdiagrams associated with colors 1, 2, and 3 respectively, we denote by $D\setminus D_i$ the subdiagram obtained by erasing all the components in $D_i$ from $D$. Additionally, we set $\langle\langle D(\Gamma_i)\rangle\rangle:=\displaystyle\sum_{s\in\Gamma_i}\langle\langle s\rangle\rangle$, where the AJ-bracket of a vertex means the AJ-bracket of its corresponding diagram. That is, $\langle\langle D(\Gamma_i)\rangle\rangle$ is the contribution of all leaves in $\Gamma_i$ to the polynomial $\langle\langle D\rangle\rangle$.

We know that $\Gamma_1$ corresponds to the trichromatic leaves. $\Gamma_2$, $\Gamma_3$ and $\Gamma_4$ correspond to the dichromatic leaves, but in each case the two colors which merge are distinct, so we will treat these three cases separately. The sets $\Gamma_5$, $\Gamma_6$ and $\Gamma_7$ correspond to the monochromatic leaves, and the leaves in each case are exactly the same (provided that the corresponding set of leaves is nonempty). Hence, in the following result we will add up the contribution of these three sets of leaves, denoting the set of monochromatic leaves as $M=\Gamma_5\cup \Gamma_6\cup \Gamma_7$, and setting $\langle\langle D(M)\rangle\rangle:=\displaystyle\sum_{s\in M}\langle\langle s\rangle\rangle$.

Finally, if $S,T\subset C$ are two disjoint subsets of crossings of $D$, given $\sigma\in \{0,1\}^{S}$ and $\tau\in \{0,1\}^{T}$, we define $\sigma\cup \tau\in \{0,1\}^{S\cup T}$ as the map sending $c$ to $\sigma(c)$ if $c\in S$ and to $\tau(c)$ if $c\in T$. 

We can now give an explicit formula for the contributions of each of the above sets of leaves to $\langle\langle D\rangle\rangle$.

\begin{theorem}
Let $D$ be a 3-tied link diagram with $D_1$, $D_2$ and $D_3$ the subdiagrams associated with colors 1, 2, and 3 respectively, and $m=x_{1,2}+x_{1,3}+x_{2,3}+y_{1,2}+y_{1,3}+y_{2,3}+z_1+z_2+z_3$ crossings. Then:

    \begin{enumerate}
        \item $\langle\langle D(\Gamma_1)\rangle\rangle=(-1)^{x}c^2 \langle D_1\rangle\langle D_2\rangle\langle D_3\rangle$ \medskip
        
        \item $\displaystyle\langle\langle D(\Gamma_2)\rangle\rangle = (-1)^{x_{1,3}+ x_{2,3}}c\,\langle D_3\rangle \sum_{\sigma\in \{0,1\}^{X_{1,2}}} H_{k_\sigma}\langle (D\setminus D_3)_{\sigma}\rangle$
            
        \item $\displaystyle\langle\langle D(\Gamma_3)\rangle\rangle = (-1)^{x_{1,2}+ x_{2,3}}c\,\langle D_2\rangle \sum_{\sigma\in \{0,1\}^{X_{1,3}}} H_{k_\sigma}\langle (D\setminus D_2)_{\sigma}\rangle$
            
        \item $\displaystyle\langle\langle D(\Gamma_4)\rangle\rangle = (-1)^{x_{1,2}+ x_{1,3}}c\,\langle D_1\rangle \sum_{\sigma\in \{0,1\}^{X_{2,3}}} H_{k_\sigma}\langle (D\setminus D_1)_{\sigma}\rangle$

      \item $\displaystyle  \langle\langle D(M) \rangle\rangle = \sum_{\sigma\in \{0,1\}^{X_{1,2}}}\sum_{\tau\in \{0,1\}^{X_{1,3}}}\sum_{\mu\in \{0,1\}^{X_{2,3}}}\sum_{\phi\in \{0,1\}^{Y_{2,3}}}{S_{k_\sigma,k_\tau,k_\mu,k_\phi}
          \langle D_{\sigma\cup\tau\cup\mu\cup\phi}\rangle}$
            
    \end{enumerate}
\noindent where $H_{k}=A^{k}+(-1)^{k+1}A^{-k}$ for every integer $k$, and  
$$
S_{k_\sigma,k_\tau,k_\mu,k_\phi}=H_{k_\sigma+k_\phi}H_{k_\tau+k_\mu} +(-1)^{k_\sigma+k_\tau-1}A^{-k_\sigma-k_\tau}H_{k_\mu}H_{k_\phi}.
$$
\end{theorem}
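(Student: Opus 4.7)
The plan is to compute each contribution $\langle\langle D(\Gamma_i)\rangle\rangle$ separately by tracing the tree $T_D$, combining the 2-tied machinery of Theorem \ref{teobicolor} with a careful record of how each crossing's classification (monochromatic, or dichromatic legal/illegal) evolves under the successive merges. For item (1), every path to a leaf in $\Gamma_1$ consists of type 2 smoothings at all $x$ crossings of $X$ (contributing $(-1)^x$) followed by $\bar 0/\bar 1$ smoothings of the monochromatic crossings in $Z$. At each leaf all dichromatic crossings are legal, so color 1 lies below color 2 below color 3, and Reidemeister II/III splits the leaf into $D_1\sqcup D_2\sqcup D_3$; the three-fold classical state sum factors as $\langle D_1\rangle\langle D_2\rangle\langle D_3\rangle$ and two applications of axiom (2) furnish $c^2$, proving item (1).

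For items (2)--(4) I would treat $\Gamma_2$ in detail; $\Gamma_3$ and $\Gamma_4$ follow by the same method after suitable color permutations, once one checks case by case which crossings remain (or become) dichromatic illegal after the single merge. Along any path to a leaf in $\Gamma_2$, the crossings in $X_{1,2}$ undergo exactly the procedure of the proof of Theorem \ref{teobicolor}: type 2 on a prefix, one pivot receiving type 0 or 1, $\bar 0/\bar 1$ on the now-monochromatic suffix. After the merge, $X_{1,3}\cup X_{2,3}$ stays dichromatic illegal (upper color 1, lower color 3) and is uniformly type 2 smoothed, contributing $(-1)^{x_{1,3}+x_{2,3}}$; Reidemeister II/III then splits off the color-3 part to yield $c\langle D_3\rangle$. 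Fixing $\sigma\in\{0,1\}^{X_{1,2}}$ and summing over pivot positions via Claims \ref{claim1} and \ref{claim2} collapses the $X_{1,2}$ contribution to $H_{k_\sigma}\langle (D\setminus D_3)_\sigma\rangle$, establishing item (2).

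For item (5), for each tuple $(\sigma,\tau,\mu,\phi)$ the leaf $D_{\sigma\cup\tau\cup\mu\cup\phi}$ is monochromatic and its classical Kauffman bracket absorbs the $\bar 0/\bar 1$ smoothings on the remaining crossings in $Y_{1,2}\cup Y_{1,3}\cup Z$. Applying the collapsing argument of Theorem \ref{teobicolor} at each pivoting level, I would compute the three sub-family contributions to the coefficient of $\langle D_{\sigma\cup\tau\cup\mu\cup\phi}\rangle$ to be
\[
H_{k_\sigma}H_{k_\tau}A^{k_\mu+k_\phi},\quad (-1)^{x_{1,3}}H_{k_\sigma}H_{k_\mu}A^{k_\phi-k_\tau},\quad (-1)^{x_{1,2}}H_{k_\tau}H_{k_\phi}A^{k_\mu-k_\sigma},
\]
from $\Gamma_5$, $\Gamma_6$, $\Gamma_7$ respectively. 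The $(-1)^{x_{i,j}}$ prefactors arise from type 2 smoothings on the entire $X_{i,j}$-set that is not pivoted in the given sub-family, and the $A^{\mp k_\bullet}$ exponents record the sign swap induced on $\bar 0/\bar 1$ smoothings of crossings previously flipped by type 2 (an instance of Claim \ref{claim2}). Using the parity identities $(-1)^{x_{1,2}}=(-1)^{k_\sigma}$ and $(-1)^{x_{1,3}}=(-1)^{k_\tau}$ (since each $k_\bullet$ has the same parity as the cardinality of its domain), the sum becomes a universal polynomial in $k_\sigma,k_\tau,k_\mu,k_\phi$; a direct expansion via $H_n=A^n+(-1)^{n+1}A^{-n}$ then verifies that the six resulting monomials in $A^{\pm1}$ coincide with those produced by $S_{k_\sigma,k_\tau,k_\mu,k_\phi}$.

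The main obstacle is item (5). The three sub-families have noticeably different structures, with different pivot sets, different type 2 prefactors, and different collections of ``flipped'' crossings whose $\bar 0/\bar 1$ coefficients reverse; recognizing that their sum telescopes into the compact two-term coefficient $H_{k_\sigma+k_\phi}H_{k_\tau+k_\mu}+(-1)^{k_\sigma+k_\tau-1}A^{-k_\sigma-k_\tau}H_{k_\mu}H_{k_\phi}$ requires iterated application of Claims \ref{claim1} and \ref{claim2} at both pivot levels, together with the somewhat delicate algebraic identification of the final sum.
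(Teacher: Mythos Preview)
Your proposal is correct and follows essentially the same strategy as the paper's proof. Both arguments trace paths in the resolution tree $T_D$, invoke the collapsing machinery of Claims~\ref{claim1} and~\ref{claim2} from the proof of Theorem~\ref{teobicolor} to convert the pivot sums into $H_k$ factors, and then compute the contributions of $\Gamma_5$, $\Gamma_6$, $\Gamma_7$ separately before combining them via a direct monomial expansion of $H_n=A^n-(-A)^{-n}$.

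The only structural difference worth noting concerns items (2)--(4). The paper does not work in $T_D$ for all three; instead it treats $\Gamma_1$ and $\Gamma_4$ jointly by first applying type~2 to all of $X_{1,2}\cup X_{1,3}$, observing that the resulting diagram splits as $D_1\sqcup(D\setminus D_1)$, and then invoking Theorem~\ref{teobicolor} on the 2-tied piece $D\setminus D_1$. For $\Gamma_2$ and $\Gamma_3$ it reorders the crossing sets so that the analogous splitting separates $D_3$ (respectively $D_2$) first. Your approach of staying in the original $T_D$ and tracking which crossings become legal or illegal after the merge is equally valid and yields the same formulas; for $\Gamma_3$ one should note that the sign you obtain directly is $(-1)^{x_{1,2}+y_{2,3}}$, which agrees with the stated $(-1)^{x_{1,2}+x_{2,3}}$ because the total number $x_{2,3}+y_{2,3}$ of crossings between two closed components is always even. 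For item (5), your three sub-family coefficients coincide exactly with the paper's $H_aH_bA^cA^d$, $H_a(-A)^{-b}H_cA^d$, $(-A)^{-a}H_bA^cH_d$ after the parity substitutions you indicate, and the paper performs precisely the twelve-term expansion you describe (six cancel pairwise, and an auxiliary term is added and subtracted to factor the remainder into $S_{k_\sigma,k_\tau,k_\mu,k_\phi}$).
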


\begin{proof}

Let us first study $\Gamma_1$ and $\Gamma_4$. Every path in $T_D$ from the root $D$ to a leaf in $\Gamma_1\cup \Gamma_4$ starts by applying smoothings of type 2 to every crossing in $X_{1,2}\cup X_{1,3}$ (see \autoref{coloredtrees}). After applying all these smoothings, we obtain a vertex $v$ of $T_D$ whose diagram is equivalent, via Reidemeister moves II and III, to $D_1\sqcup(D\setminus D_1)$, where the color of $D_1$ is different from the two colors in $D\setminus D_1$. Since $D_1$ is monochromatic, we have $\langle\langle D_1\rangle\rangle=\langle D_1\rangle$. Notice that the leaves in $\Gamma_1\cup \Gamma_4$ are precisely those which are descendants of $v$. Therefore 
\begin{align*}
\langle\langle D(\Gamma_1) \rangle\rangle + \langle\langle D(\Gamma_4) \rangle\rangle & =  (-1)^{x_{1,2}+x_{1,3}}\langle\langle D_1\sqcup(D\setminus D_1) \rangle\rangle \\ & =  (-1)^{x_{1,2}+x_{1,3}} c\, \langle D_1\rangle \, \langle\langle D\setminus D_1\rangle\rangle.
\end{align*}
We can then apply \autoref{teobicolor} to the 2-tied link diagram $D\setminus D_1$, and hence obtain:

$$
    \langle\langle D(\Gamma_1)\rangle\rangle + \langle\langle D(\Gamma_4)\rangle\rangle = (-1)^{x_{1,2}+ x_{1,3}}c\,\langle D_1\rangle\langle\langle D\setminus D_1\rangle\rangle
$$    
$$
     = (-1)^{x_{1,2}+ x_{1,3}}c\,\langle D_1\rangle \left[(-1)^{x_{2,3}}\langle D_2\rangle\langle D_3\rangle c+\sum_{\sigma\in \{0,1\}^{X_{2,3}}} H_{k_\sigma}\langle (D\setminus D_1)_{\sigma}\rangle\right]
$$
$$
     = (-1)^{x}c^2 \langle D_1\rangle\langle D_2\rangle\langle D_3\rangle + (-1)^{x_{1,2}+ x_{1,3}}c\,\langle D_1\rangle \sum_{\sigma\in \{0,1\}^{X_{2,3}}} H_{k_{\sigma}}\langle (D\setminus D_1)_{\sigma}\rangle. 
$$

Note that the first term in the final sum is $\langle\langle D(\Gamma_1)\rangle\rangle$, and the second is $\langle\langle D(\Gamma_4)\rangle\rangle$.

For the cases $\Gamma_2$ and $\Gamma_3$ the argument is analogous. One simply considers a new resolution tree in which the sets of crossings are ordered $$X_{1,3}<X_{2,3}<X_{1,2}<Y_{1,3}<Y_{2,3}<Y_{1,2}<Z$$ for $\Gamma_2$, and $$X_{1,2}<X_{2,3}<X_{1,3}<Y_{1,2}<Y_{2,3}<Y_{1,3}<Z$$ for $\Gamma_3$. Changing the ordering of the sets of crossings alters the resolution tree, but the sets $\Gamma_i$ for $i=1,\dots,4$ are invariant: they have the same leaves as the resolution tree $T_D$ defined at the beginning of this section.

Let us now study the contributions of $M=\Gamma_5\cup \Gamma_6\cup \Gamma_7$. We will compute each $\Gamma_i$ separately, and will add up the results.

In the case of $\Gamma_5$, we first index the crossings in the sets $X_{i,j}$ as follows: $X_{1,2}=\{c_1,\dots,c_p\}$, $X_{1,3}=\{c_{p+1},\dots,c_q\}$ and $X_{2,3}=\{c_{q+1},\dots,c_x\}$.

As we saw in the proof of \autoref{tricolor}, every path from the root of $T_D$ to a leaf in $\Gamma_5$ is determined by the following choices: two crossings $c_r\in X_{1,2}$ and $c_s\in X_{1,3}$, and the smoothings (not of type 2) that one applies to the crossings in $C$ (the set of all crossings of $D$). That is, each leaf of $\Gamma_5$ is determined by $r\in\{1,\ldots,p\}$, $s\in \{p+1,\ldots,q\}$, and a map $\varphi\in\{0,1\}^C$. We will denote such a leaf by $v_{\varphi,r,s}$, and we notice that its corresponding diagram is the monochromatic diagram $D_\varphi$ (recall notation from \autoref{notation}).

If we fix $c_r\in X_{1,2}$, $c_s\in X_{1,3}$ and $\varphi\in \{0,1\}^C$, we can describe in detail the path that leads from the root of $T_D$ to the leaf $v_{\varphi,r,s}$. It is obtained by the following sequence of smoothings:

\begin{enumerate}
    \item Apply smoothings of type 2 to the crossings $c_1,\dots,c_{r-1}\in X_{1,2}$.
    \item Smooth the crossing $c_r$ according to $\varphi$.
    \item Apply smoothings of type 2 to the crossings $c_{p+1},\dots,c_{s-1}\in X_{1,3}$.
    \item Smooth the crossing $c_s$ according to $\varphi$.
    \item Apply a $\overline{1-\varphi(c_i)}$ smoothing to each $c_i\in\{c_1,\dots,c_{r-1}\}\subset X_{1,2}$.
    \item Apply a $\overline{\varphi(c_j)}$ smoothing to each $c_j\in \{c_{r+1},\dots,c_{p}\}\subset X_{1,2}$.
    \item Apply a $\overline{1-\varphi(c_i)}$ smoothing to each $c_i\in\{c_{p+1},\dots,c_{s-1}\}\subset X_{1,3}$. 
    \item Apply a $\overline{\varphi(c_j)}$ smoothing to each $c_j\in \{c_{s+1},\dots,c_{q}\}\subset X_{1,3}$.
    \item Apply a $\overline{\varphi(c)}$ smoothing to every crossing $c\in X_{2,3}$.
    \item Apply a $\overline{\varphi(c)}$ smoothing to every crossing $c\in Y\cup Z$.
\end{enumerate}
If we denote $\sigma=\varphi_{|_{X_{1,2}}}$, $\tau=\varphi_{|_{X_{1,3}}}$, $\mu=\varphi_{|_{X_{2,3}}}$ and $\nu=\varphi_{|_{Y\cup Z}}$, we can consider the following tuples, which follow the notation in the proof of \autoref{teobicolor}:
\begin{itemize}
    \item $\sigma_{(r)}=(\overline{1-\varphi(c_1)},\dots,\overline{1-\varphi(c_{r-1})},\varphi(c_r), \overline{\varphi(c_{r+1})},\dots,\overline{\varphi(c_{p})})$,
        
    \item $\tau_{(s-p)}=(\overline{1-\varphi(c_{p+1})},\dots,\overline{1-\varphi(c_{s-1})},\varphi(c_s), \overline{\varphi(c_{s+1})},\dots,\overline{\varphi(c_{q})})$.
\end{itemize}
Notice that the smoothings (not of type 2) applied in the path from the root of $T_D$ to $v_{\varphi,r,s}$ are precisely those in $\sigma_{(r)}$ and $\tau_{(s-p)}$, followed by those determined by $\mu$ and $\nu$ (replacing $0$ by $\bar{0}$ and $1$ by $\bar{1}$, respectively). Hence, the polynomial $P_{\varphi,r,s}$ associated to the leaf $v_{\varphi,r,s}$ is:
\begin{equation*}
    P_{\varphi,r,s}=(-1)^{r-1}(A+A^{-1})(-1)^{s-p-1}(A+A^{-1})A^{e_{\sigma_{(r)}}}A^{e_{\tau_{(s-p)}}}A^{k_{\mu}}A^{k_{\nu}}\langle D_{\varphi}\rangle.
\end{equation*}
It follows that the polynomial $P_{\varphi}$ associated to all leaves of $\Gamma_5$ having diagram $D_{\varphi}$ is:

\begin{align*}
    P_{\varphi}&=\sum_{r=1}^p\sum_{s=p+1}^qP_{\varphi,r,s}\\
    &=\sum_{r=1}^p\sum_{s=p+1}^q (-1)^{r-1}(A+A^{-1})(-1)^{s-p-1}(A+A^{-1})A^{e_{\sigma_{(r)}}}A^{e_{\tau_{(s-p)}}}A^{k_{\mu}}A^{k_{\nu}}\langle D_{\varphi}\rangle\\
    &= \left[\sum_{r=1}^{x_{1,2}} (-1)^{r-1}(A+A^{-1})A^{e_{\sigma_{(r)}}}\right]\left[\sum_{t=1}^{x_{1,3}} (-1)^{t-1}(A+A^{-1})A^{e_{\tau_{(t)}}}\right]A^{k_{\mu}}A^{k_{\nu}}\langle D_{\varphi}\rangle
\end{align*}
The two sums in the last expression are analogous to those computed in the proof of \autoref{teobicolor}, so they are equal to $H_{k_{\sigma}}$ and $H_{k_{\tau}}$, respectively. Therefore,
$$
    P_{\varphi}= H_{k_\sigma}H_{k_\tau}A^{k_{\mu}}A^{k_{\nu}}\langle D_{\varphi}\rangle.
$$
Now we can add all these polynomials, taking into account that every map $\varphi\in \{0,1\}^{C}$ is determined by its restrictions to $X_{1,2}$, $X_{1,3}$, $X_{2,3}$ and $Y\cup Z$. Hence:
\begin{eqnarray*}
  \langle\langle D(\Gamma_5)\rangle\rangle & = & \sum_{\varphi\in \{0,1\}^C}{P_{\varphi}}
\\
 & = & \sum_{\sigma\in \{0,1\}^{X_{1,2}}}\sum_{\tau\in \{0,1\}^{X_{1,3}}}\sum_{\mu\in \{0,1\}^{X_{2,3}}}\sum_{\nu\in \{0,1\}^{Y\cup Z}}{H_{k_\sigma}H_{k_\tau}A^{k_{\mu}}A^{k_{\nu}}\langle D_{\varphi}\rangle}
\\
 & = & \sum_{\sigma\in \{0,1\}^{X_{1,2}}}\sum_{\tau\in \{0,1\}^{X_{1,3}}}\sum_{\mu\in \{0,1\}^{X_{2,3}}}{H_{k_\sigma}H_{k_\tau}A^{k_{\mu}}\left(\sum_{\nu\in \{0,1\}^{Y\cup Z}}{A^{k_{\nu}}\langle D_{\varphi}\rangle}\right)}
\\
 & = & \sum_{\sigma\in \{0,1\}^{X_{1,2}}}\sum_{\tau\in \{0,1\}^{X_{1,3}}}\sum_{\mu\in \{0,1\}^{X_{2,3}}}{H_{k_\sigma}H_{k_\tau}A^{k_{\mu}}\langle D_{\sigma\cup \tau\cup \mu}\rangle}
\end{eqnarray*}
where the last equality comes from the classical calculation of $\langle D_{\sigma\cup \tau\cup \mu}\rangle$, which is obtained by smoothing all crossings of $Y\cup Z$ in all possible ways.

We can do the sum along $\mu$ to obtain 
$$
\langle\langle D(\Gamma_5)\rangle\rangle = \displaystyle \sum_{\sigma\in \{0,1\}^{X_{1,2}}}\sum_{\tau\in \{0,1\}^{X_{1,3}}}{H_{k_\sigma}H_{k_\tau}\langle D_{\sigma\cup \tau}\rangle}.
$$
In order to add this expression to $\langle\langle D(\Gamma_6)\rangle\rangle$ and to $\langle\langle D(\Gamma_7)\rangle\rangle$, it is convenient to separate the above sum into more summands, separating the terms with distinct values of $\mu$ and also those terms with distinct values of $\phi= \varphi_{|_{Y_{2,3}}}$. We then obtain:
$$
\langle\langle D(\Gamma_5)\rangle\rangle = \displaystyle \sum_{\sigma\in \{0,1\}^{X_{1,2}}}\sum_{\tau\in \{0,1\}^{X_{1,3}}}\sum_{\mu\in \{0,1\}^{X_{2,3}}}\sum_{\phi\in \{0,1\}^{Y_{2,3}}}{H_{k_\sigma}H_{k_\tau}A^{k_\mu}A^{k_\phi}\langle D_{\sigma\cup\tau\cup\mu\cup\phi}\rangle}.
$$

This shows the case of $\Gamma_5$. Notice that, if $X_{1,2}=\varnothing$, there is only one possible $\sigma$ and $k_\sigma=0$, so $H_{k_\sigma}=0$, hence in this case $\langle\langle D(\Gamma_5)\rangle\rangle=0$. The same happens if $X_{1,3}=\varnothing$.

The case of $\Gamma_6$ is similar. The procedure to obtain a vertex $v_{\varphi,r,s}\in\Gamma_6$ from the root of $T_D$, choosing some $r\in \{1,\ldots,p\}$, some $s\in \{q+1,\ldots,x\}$, and some $\varphi\in \{0,1\}^C$ is the following:
\begin{enumerate}
    \item Apply smoothings of type 2 to the crossings $c_1,\dots,c_{r-1}\in X_{1,2}$.
    \item Smooth the crossing $c_r$ according to $\varphi$.
    \item Apply smoothings of type 2 to the crossings $c_{p+1},\dots,c_{q}\in X_{1,3}$.
    \item Apply smoothings of type 2 to the crossings $c_{q+1},\dots,c_{s-1}\in X_{2,3}$.
    \item Smooth the crossing $c_s$ according to $\varphi$.
    \item Apply a $\overline{1-\varphi(c_i)}$ smoothing to each $c_i\in\{c_1,\dots,c_{r-1}\}\subset X_{1,2}$.
    \item Apply a $\overline{\varphi(c_j)}$ smoothing to each $c_j\in \{c_{r+1},\dots,c_{p}\}\subset X_{1,2}$.
    \item Apply a $\overline{1-\varphi(c_i)}$ smoothing to each $c_i\in\{c_{p+1},\dots,c_{q}\}\subset X_{1,3}$. 
    \item Apply a $\overline{1-\varphi(c_i)}$ smoothing to each $c_i\in\{c_{q+1},\dots,c_{s-1}\}\subset X_{2,3}$. 
    \item Apply a $\overline{\varphi(c_j)}$ smoothing to each $c_j\in \{c_{s+1},\dots,c_{x}\}\subset X_{2,3}$.
    \item Apply a $\overline{\varphi(c)}$ smoothing to every crossing $c\in Y\cup Z$.
\end{enumerate}

As above, we define  $\sigma=\varphi_{|_{X_{1,2}}}$, $\tau=\varphi_{|_{X_{1,3}}}$, $\mu=\varphi_{|_{X_{2,3}}}$, $\nu=\varphi_{|_{Y\cup Z}}$, and the tuple $$\mu_{(s-q)}=(\overline{1-\varphi(c_{q+1})},\dots,\overline{1-\varphi(c_{s-1})},\varphi(c_s), \overline{\varphi(c_{s+1})},\dots,\overline{\varphi(c_{x})}).$$
Notice that the crossings in $X_{1,3}$ are smoothed in the opposite way it is indicated by $\tau$. In this case, the polynomial $P_{\varphi,r,s}$ associated to the leaf $v_{\varphi,r,s}$ is:
\begin{equation*}
    P_{\varphi,r,s}=(-1)^{r-1}(A+A^{-1})(-1)^{x_{1,3}}(-1)^{s-q-1}(A+A^{-1}) A^{e_{\sigma_{(r)}}}A^{e_{\mu_{(s-q)}}}A^{-k_{\tau}}A^{k_{\nu}}\langle D_{\varphi}\rangle.
\end{equation*}

If we sum all these polynomials for all $r$ and $s$, we obtain:
$$
 P_{\varphi}=\left[\sum_{r=1}^{x_{1,2}} (-1)^{r-1}(A+A^{-1})A^{e_{\sigma_{(r)}}}\right]\left[\sum_{t=1}^{x_{2,3}} (-1)^{t-1}(A+A^{-1})A^{e_{\mu_{(t)}}}\right](-1)^{x_{1,3}}A^{-k_{\tau}}A^{k_{\nu}}\langle D_{\varphi}\rangle
$$
$$
  = H_{k_\sigma}H_{k_\mu}(-1)^{x_{1,3}}A^{-k_{\tau}}A^{k_{\nu}}\langle D_{\varphi}\rangle 
  = H_{k_\sigma}H_{k_\mu}(-A)^{-k_{\tau}}A^{k_{\nu}}\langle D_{\varphi}\rangle,
$$
where the last equality holds since $x_{1,3}$ and $k_{\tau}$ have the same parity. Finally, adding all polynomials corresponding to vertices in $\Gamma_6$, we obtain:
\begin{eqnarray*}
  \langle\langle D(\Gamma_6)\rangle\rangle & = & \sum_{\varphi\in \{0,1\}^{C}}{P_\varphi} 
\\
  & = & \sum_{\sigma\in \{0,1\}^{X_{1,2}}} \sum_{\tau\in \{0,1\}^{X_{1,3}}} \sum_{\mu\in \{0,1\}^{X_{2,3}}} \sum_{\nu\in \{0,1\}^{Y\cup Z}}{H_{k_\sigma}(-A)^{-k_{\tau}}H_{k_\mu}A^{k_{\nu}}\langle D_{\varphi}\rangle}
\\
  & = & \sum_{\sigma\in \{0,1\}^{X_{1,2}}} \sum_{\tau\in \{0,1\}^{X_{1,3}}} \sum_{\mu\in \{0,1\}^{X_{2,3}}} H_{k_\sigma}(-A)^{-k_{\tau}}H_{k_\mu}\langle D_{\sigma\cup \tau\cup \mu}\rangle.
\end{eqnarray*}

As in the previous case, we can separate the summands corresponding to $\phi= \varphi_{|_{Y_{2,3}}}$ and we obtain:
$$
\langle\langle D(\Gamma_6)\rangle\rangle = \displaystyle \sum_{\sigma\in \{0,1\}^{X_{1,2}}}\sum_{\tau\in \{0,1\}^{X_{1,3}}}\sum_{\mu\in \{0,1\}^{X_{2,3}}}\sum_{\phi\in \{0,1\}^{Y_{2,3}}}{H_{k_\sigma}(-A)^{-k_{\tau}}H_{k_\mu}A^{k_\phi}\langle D_{\sigma\cup\tau\cup\mu\cup\phi}\rangle}.
$$

Now let us study $\Gamma_7$. The choices in this case are a crossing $c_r\in X_{1,3}$, a crossing $c_s\in Y_{2,3}$ (since legal crossings from $Y_{2,3}$ become illegal after the smoothing of $c_r$, which transforms color 3 into color 1),  and a map $\varphi\in \{0,1\}^C$. Suppose that the crossings in $Y_{2,3}$ are $\{c_{u+1},\ldots,c_v\}$. The steps are:
\begin{enumerate}
    \item Apply smoothings of type 2 to the crossings $c_{1},\dots,c_{p}\in X_{1,2}$.
    \item Apply smoothings of type 2 to the crossings $c_{p+1},\dots,c_{r-1}\in X_{1,3}$.
    \item Smooth the crossing $c_r$ according to $\varphi$.
    \item Apply smoothings of type 2 to the crossings $c_{u+1},\dots,c_{s-1}\in Y_{2,3}$.
    \item Smooth the crossing $c_s$ according to $\varphi$.
    \item Apply a $\overline{1-\varphi(c_i)}$ smoothing to each $c_i\in \{c_{1},\dots,c_{p}\}\subset X_{1,2}$. 
    \item Apply a $\overline{1-\varphi(c_i)}$ smoothing to each $c_i\in \{c_{p+1},\dots,c_{r-1}\}\subset X_{1,3}$.
    \item Apply a $\overline{\varphi(c_j)}$ smoothing to each $c_i\in \{c_{r+1},\dots,c_{q}\}\subset X_{1,3}$.
    \item Apply a $\overline{\varphi(c)}$ smoothing to every crossing $c\in X_{2,3}\cup Y_{1,2}\cup Y_{1,3}$.
    \item Apply a $\overline{1-\varphi(c_i)}$ smoothing to each $c_i\in \{c_{u+1},\dots,c_{s-1}\}\subset Y_{2,3}$.
    \item Apply a $\overline{\varphi(c_j)}$ smoothing to each $c_i\in \{c_{s+1},\dots,c_{v}\}\subset Y_{2,3}$.
    \item Apply a $\overline{\varphi(c)}$ smoothing to every crossing $c\in Z$.
\end{enumerate}

We set $\sigma=\varphi_{|_{X_{1,2}}}$, $\tau=\varphi_{|_{X_{1,3}}}$, $\mu=\varphi_{|_{X_{2,3}}}$, $\phi=\varphi_{|_{Y_{2,3}}}$, $\psi=\varphi_{|_{Y_{1,2}\cup Y_{1,3}\cup Z}}$, and the tuple $$\phi_{(s-u)} =(\overline{1-\varphi(c_{u+1})},\dots,\overline{1-\varphi(c_{s-1})},\varphi(c_s), \overline{\varphi(c_{s+1})},\dots,\overline{\varphi(c_{v})}).$$ Then we have:
\begin{equation*}
    P_{\varphi,r,s}=(-1)^{x_{1,2}}(-1)^{r-p-1}(A+A^{-1})(-1)^{s-u-1}(A+A^{-1}) A^{-k_{\sigma}}A^{e_{\tau_{(r-p)}}}A^{k_\mu}A^{e_{\phi_{(s-u)}}}A^{k_{\psi}}\langle D_{\varphi}\rangle,
\end{equation*}

giving rise to 
\begin{equation*}
    P_{\varphi}=(-A)^{-k_{\sigma}} H_{k_\tau} A^{k_\mu} A
    ^{k_\psi}  H_{k_\phi} \langle D_{\varphi}\rangle.
\end{equation*}
The sum is:
$$
  \langle\langle D(\Gamma_7)\rangle\rangle 
   = \sum_{\sigma\in \{0,1\}^{X_{1,2}}} \sum_{\tau\in \{0,1\}^{X_{1,3}}} \sum_{\mu\in \{0,1\}^{X_{2,3}}} \sum_{\phi\in \{0,1\}^{Y_{2,3}}} (-A)^{-k_{\sigma}}H_{k_\tau}A^{k_\mu}H_{k_\phi} \langle D_{\sigma\cup \tau\cup \mu\cup\phi}\rangle.
$$

Now we can add the three expressions, which we have written as follows:
$$
  \langle\langle D(\Gamma_5)\rangle\rangle 
   = \sum_{\sigma\in \{0,1\}^{X_{1,2}}} \sum_{\tau\in \{0,1\}^{X_{1,3}}} \sum_{\mu\in \{0,1\}^{X_{2,3}}} \sum_{\phi\in \{0,1\}^{Y_{2,3}}} H_{k_\sigma}H_{k_\tau}A^{k_{\mu}} A^{k_\phi}\langle D_{\sigma\cup \tau\cup \mu\cup \phi}\rangle.
$$
$$
  \langle\langle D(\Gamma_6)\rangle\rangle 
   = \sum_{\sigma\in \{0,1\}^{X_{1,2}}} \sum_{\tau\in \{0,1\}^{X_{1,3}}} \sum_{\mu\in \{0,1\}^{X_{2,3}}} \sum_{\phi\in \{0,1\}^{Y_{2,3}}} H_{k_\sigma}(-A)^{-k_{\tau}}H_{k_\mu}A^{k_\phi}\langle D_{\sigma\cup \tau\cup \mu\cup\phi}\rangle.
$$
$$
  \langle\langle D(\Gamma_7)\rangle\rangle 
   = \sum_{\sigma\in \{0,1\}^{X_{1,2}}} \sum_{\tau\in \{0,1\}^{X_{1,3}}} \sum_{\mu\in \{0,1\}^{X_{2,3}}} \sum_{\phi\in \{0,1\}^{Y_{2,3}}} (-A)^{-k_{\sigma}}H_{k_\tau}A^{k_\mu}H_{k_\phi} \langle D_{\sigma\cup \tau\cup \mu\cup\phi}\rangle.
$$

We then need to show what is the result of adding up
$$
H_aH_bA^cA^d + H_a(-A)^{-b}H_cA^d + (-A)^{-a}H_bA^cH_d
$$
for given integers $a,b,c,d$. We will call this sum $S$.

We first notice that $H_k= A^{k}+(-1)^{k+1}A^{-k} = A^k-(-A)^{-k}$. Therefore, we have:
\begin{align*}
   H_aH_bA^cA^d & = & & A^{a} A^{b} A^{c} A^{d}  & - & (-A)^{-a} A^{b} A^{c} A^{d} 
\\ 
                &  & - & A^{a} (-A)^{-b} A^{c} A^{d} & + & (-A)^{-a} (-A)^{-b} A^{c} A^{d}
\\
   H_a(-A)^{-b}H_cA^d & = & & A^{a} (-A)^{-b} A^{c} A^{d} & - & (-A)^{-a} (-A)^{-b} A^{c} A^{d} 
\\
                & & - & A^{a} (-A)^{-b} (-A)^{-c} A^{d} & + & (-A)^{-a} (-A)^{-b} (-A)^{-c} A^{d}
\\
   (-A)^{-a}H_bA^cH_d & = & & (-A)^{-a} A^{b} A^{c} A^{d} & - & (-A)^{-a} (-A)^{-b} A^{c} A^{d} 
\\
                & & - & (-A)^{-a} A^{b} A^{c} (-A)^{-d} & + & (-A)^{-a} (-A)^{-b} A^{c} (-A)^{-d}
\end{align*}
If we numerate from (1) to (12) the summands on the right side of the equalities that we want to add up, in the order they appear in the above expressions and all  with positive sign, the total sum is
$$
  S =  (1)-(2)-(3)+(4)+(5)-(6)-(7)+(8)+(9)-(10)-(11)+(12).
$$ 
We see that (2)=(9), (3)=(5), and (4)=(6). Therefore, these summands cancel and the total sum is:
$$
  S =  (1)-(7)+(8)-(10)-(11)+(12).
$$
We now add an extra summand $(13)=(-A)^{-a}(-A)^{-b}(-A)^{-c}(-A)^{-d}$ and its opposite, and we reorder the summands, so we obtain:
$$
   S =  [(1)-(7)-(11)+(13)]-[(10)-(8)-(12)+(13)].
$$
Finally, we simplify the expressions in each bracket, say $S_1$ and $S_2$. On the one hand, we have:
\begin{eqnarray*}
S_1 & = &  (1)-(7)-(11)+(13) 
\\
   & = & A^{a+d}A^{b+c} - A^{a+d}(-A)^{-b-c} - (-A)^{-a-d}A^{b+c} + (-A)^{-a-d}(-A)^{-b-c}
\\
   & = & \left( A^{a+d}- (-A)^{-a-d}\right)\left( A^{b+c}- (-A)^{-b-c}\right)
\\
   & = & H_{a+d}H_{b+c}.
\end{eqnarray*}
On the other hand:
\begin{eqnarray*}
S_2 & = &  (10)-(8)-(12)+(13) 
\\
   & = & (-A)^{-a-b}\left[A^{c}A^{d} - (-A)^{-c}A^{d} -A^{c}(-A)^{-d} + (-A)^{-c}(-A)^{-d}\right]
\\   
   & = & (-A)^{-a-b}\left[\left(A^{c}-(-A)^{-c}\right)\left(A^{d}-(-A)^{-d}\right)\right]
\\
   & = & (-A)^{-a-b} H_{c}H_{d}.
\end{eqnarray*}
Therefore, we finally obtain:
$$
S = S_1 - S_2 = H_{a+d}H_{b+c} - (-A)^{-a-b} H_{c}H_{d}.
$$
Or, alternatively,
$$
S = H_{a+d}H_{b+c} + (-1)^{a+b-1}A^{-a-b} H_{c}H_{d}.
$$
The final value of $\langle\langle D(M)\rangle\rangle$ is the sum of all expressions $S$ for distinct values of $a=k_{\sigma}$, $b=k_{\tau}$, $c=k_{\mu}$, and $d=k_{\phi}$, hence we obtain the expression in the statement.
\end{proof}

\vspace{0.1cm}

\bibliographystyle{amsplain}
\bibliography{biblio}{}

\providecommand{\bysame}{\leavevmode\hbox to3em{\hrulefill}\thinspace}
\providecommand{\MR}{\relax\ifhmode\unskip\space\fi MR }
\providecommand{\MRhref}[2]{%
  \href{http://www.ams.org/mathscinet-getitem?mr=#1}{#2}
}
\providecommand{\href}[2]{#2}
\begin{thebibliography}{1}

\bibitem{Aicardi2016}
F.~Aicardi and J.~Juyumaya, \emph{{Tied links}}, Journal of Knot Theory and its Ramifications \textbf{25} (2016), no.~9, 1--28.

\bibitem{Aicardi2018}
\bysame, \emph{{Kauffman type invariants for tied links}}, Mathematische Zeitschrift \textbf{289} (2018), no.~1-2, 567--591.

\bibitem{Aicardi2020}
\bysame, \emph{{Two parameters bt-algebra and invariants for links and tied links}}, Arnold Mathematical Journal \textbf{6} (2020), no.~1, 131--148.

\bibitem{Aicardi2021}
\bysame, \emph{Tied links and invariants for singular links}, Advances in Mathematics \textbf{381} (2021), 107629.

\bibitem{Chlouveraki2020}
M.~Chlouveraki, J.~Juyumaya, K.~Karvounis, and S.~Lambropoulou, \emph{{Identifying the Invariants for Classical Knots and Links from the Yokonuma-Hecke Algebras}}, International Mathematics Research Notices \textbf{2020} (2020), no.~1, 214--286.

\bibitem{Cardenas2024}
O.~Cárdenas-Andaur, \emph{On the {A}icardi-{J}uyumaya bracket for tied links}, Journal of Knot Theory and its Ramifications (To appear).

\bibitem{Kauffman1987}
L.~H. Kauffman, \emph{{State models and the Jones polynomial}}, Topology \textbf{26} (1987), no.~3, 395--407.

\bibitem{Khovanov2000}
M.~Khovanov, \emph{A categorification of the {J}ones polynomial}, Duke Mathematical Journal \textbf{101} (2000), 359--426.

\end{thebibliography}

\end{document}